\DeclareMathAlphabet\mathbb{U}{msb}{m}{n}
\DeclareSymbolFont{cyrletters}{OT2}{wncyr}{m}{n}
\DeclareMathOperator{\Sym}{{\sf Sym}}
\DeclareMathOperator{\Tor}{\textsl{Tor}}
\DeclareMathOperator{\Ker}{{\sf Ker}}
\DeclareMathOperator{\im}{{\rm I}{\sf m}}
\def\id{\textsl{id}}
\def\F2{{\mathbb{F}_2}}
\def\ZZ{\mathbb{Z}}
\def\ttt{\textsl{t}}
\def\epi{\twoheadrightarrow}
\def\mono{\rightarrowtail}
\def\AA{{\mathscr A}}
\numberwithin{equation}{section}
\newtheorem{theorem}{Theorem}[section]
\newtheorem{corollary}[theorem]{Corollary}
\newtheorem{lemma}[theorem]{Lemma}
\newtheorem{proposition}[theorem]{Proposition}
\theoremstyle{definition}
\begin{document}

\thanks{This work is supported by the Russian Science Foundation under grant 16-11-10073}

\title{Mod-$2$ (co)homology of an abelian group}
\author{Sergei O. Ivanov}

\address{
Laboratory of Modern Algebra and Applications,  St. Petersburg State University, 14th Line, 29b,
Saint Petersburg, 199178 Russia}
\email{ivanov.s.o.1986@gmail.com}

\author{Anatolii Zaikovskii}
\address{Laboratory of Modern Algebra and Applications, St. Petersburg State University, 14th Line, 29b,
Saint Petersburg, 199178 Russia and St. Petersburg Department of
Steklov Mathematical Institute} \email{anat097@mail.ru}

\dedicatory{Dedicated to Alexander Generalov on the occasion of his 70th birthday.}

\begin{abstract} It is known that for a prime $p\ne 2$ there is the following natural description of the homology algebra of an abelian group $H_*(A,\mathbb F_p)\cong \Lambda(A/p)\otimes \Gamma({}_pA)$ and for finitely generated abelian groups there is the following description of the cohomology algebra of $H^*(A,\mathbb F_p)\cong \Lambda((A/p)^\vee)\otimes {\sf Sym}(({}_pA)^\vee).$
We prove that there are no such  descriptions for $p=2$ that ``depend'' only on $A/2$ and ${}_2A$ but we provide natural descriptions of $H_*(A,\mathbb F_2)$ and $H^*(A,\mathbb F_2)$ that ``depend'' on  $A/2,$ ${}_2A$ and a linear map $\tilde \beta:{}_2A\to A/2.$
Moreover, we prove that there is a filtration by subfunctors on $H_n(A,\mathbb F_2)$ whose quotients are $\Lambda^{n-2i}(A/2)\otimes \Gamma^i({}_2A)$ and that for finitely generated abelian groups there is a natural filtration on $H^n(A,\mathbb F_2)$ whose quotients are $ \Lambda^{n-2i}((A/2)^\vee)\otimes {\sf Sym}^i(({}_2A)^\vee).$ 
\end{abstract}

\maketitle

\section*{\bf Introduction}

Let $K$ be a commutative ring and $A$ be an abelian group.  Then the summation homomorphism $A\oplus A\to A, (a,b)\mapsto a+b$ induces a map $H_*(A,K)\otimes H_*(A,K)\to H_*(A,K)$ called Pontryagin product, which gives a structure of a graded supercommutative algebra on $H_*(A,K)$ (see \cite[Ch. V]{br}). Moreover, the diagonal map $A\to A\oplus A$ and the sign map $A\to A$ induce a comultiplication and an antipode on $H_*(A,K)$ which gives a structure of a graded Hopf algebra on $H_*(A,K).$ For a prime $p$ we set
\[
A/p:=A/pA\cong A\otimes \ZZ/p, \hspace{1cm} {}_pA:=\{a\in A\mid pa=0\}\cong \Tor(A,\ZZ/p).
\]
For a vector space $V$ we denote by $V[n]$ the graded vector space concentrated in the degree $n.$
In work of H. Cartan \cite{Cartan1} it was proved (and exposed later in details in  \cite{Cartan}, see also \cite[Ch. V. Th. 6.6]{br}) that for a prime $p\ne 2$ there is a natural isomorphism of graded algebras
\begin{equation}\label{eq_iso_for_odd}
H_*(A,\mathbb F_p)\cong \Lambda (A/p[1])\otimes \Gamma({}_pA[2]),
\end{equation}    
where $\Lambda$ denotes the exterior algebra over $\mathbb F_p$, $\Gamma$ denotes the divided power algebra over $\mathbb F_p.$  In fact,  this is a natural isomorphism of Hopf algebras. Since for finitely generated abelian groups cohomology is dual to homology, we obtain that there is an isomorphism 
$$H^*(A,\mathbb F_p)\cong \Lambda((A/p)^\vee[1])\otimes \Sym(({}_pA)^\vee [2]),$$
where $(-)^\vee$ denotes the dual vector space. 
In particular, there is a natural isomorphism for each homology group 
\begin{equation}\label{eq_iso_for_n}
H_n(A,\mathbb F_p)\cong \bigoplus_{i=0}^{\lfloor n/2\rfloor } \Lambda^{n-2i} (A/p)\otimes \Gamma^i({}_p A),
\end{equation}
and in the case of finitely generated groups for each cohomology group:
$$
H^n(A,\mathbb F_p)\cong \bigoplus_{i=0}^{\lfloor n/2\rfloor } \Lambda^{n-2i} ((A/p)^\vee)\otimes \Sym^i(({}_p A)^\vee).
$$
The isomorphism \eqref{eq_iso_for_odd} means that the homology  algebra $H_*(A,\mathbb F_p)$ can be naturally recovered from the two vector spaces $A/p$ and ${}_pA$ as a graded Hopf algebra. Non-formally, the couple of vector spaces $(A/p,{}_pA)$ is the ``minimal information'' about $A$ that is  needed to describe the homology algebra naturally in the case $p\ne 2$. More formally, 
if we denote by $\textsl{Vect}^2$ the category of couples of $\mathbb F_p$-vector spaces and denote by $\ttt:\textsl{Ab}\to \textsl{Vect}^2$ the functor $A\mapsto (A/p,{}_pA),$ 
then there is a factorization of the functor $H_*(-,\mathbb F_p)$ to the category of graded Hopf algebras via the functor $\ttt.$
In particular, there is a factorization for each homology group.
\begin{equation}\label{eq_factor_n}
\begin{tikzcd}
\textsl{Ab} \arrow[rr,"{H_n(-,\mathbb F_p)}"] \arrow[dr,"\ttt"'] && \textsl{Vect} \\
 &\textsl{Vect}^2\arrow[ru]
\end{tikzcd}
\end{equation}
The isomorphism \eqref{eq_iso_for_n} gives a simple formula for $H_2(A,\mathbb F_p)$ when $p\ne 2:$
\[H_2(A,\mathbb F_p)\cong \Lambda^2(A/p)\oplus {}_pA.\]
However, for $p=2$ we only have a short exact sequence
\[
0 \longrightarrow \Lambda^2(A/2) \longrightarrow H_2(A,\F2) \longrightarrow {}_2A \longrightarrow 0,
\]
which does not split naturally \cite[\S 3]{Mikhailov}. So, the behavior of $H_*(A,\mathbb F_p)$ is more complicated, when $p=2.$

The goal of this paper is to work out the situation for $p=2$ and, in particular, give a natural description of the graded Hopf algebra $H_*(A,\F2)$ for arbitrary abelian group $A$ and the algebra  $H^*(A,\F2)$ for a finitely generated group $A.$ Moreover, our non-formal goal is to give a description that uses some ``minimal information'' 
about $A$ to recover naturally the homology algebra. We also obtain a natural filtration on
 $H_n(A,\F2)$ whose quotients are $\Lambda^i(A/2) \otimes \Gamma^j({}_2A),$ 
 and dually in the case of a finitely generated group $A$ a natural filtration on 
 $H^n(A,\F2)$ whose quotients are $\Lambda^i((A/2)^\vee) \otimes \Sym^j(({}_2A)^\vee).$ 
  This is an analog of the decomposition   \eqref{eq_iso_for_n} in the case of $p=2.$

We prove that there is no such factorization like \eqref{eq_factor_n} for any $n\geq 2$ in the case $p=2$ (\Cref{prop_non_existence}). So the couple of vector spaces $(A/2,{}_2A)$ is not enough information to recover naturally the Hopf algebras $H_*(A,\F2)$ and $H^*(A,\F2)$ or even the group $H_n(A,\F2)$ for some $n\geq 2.$ However, it is enough to add a linear map between these two vector spaces, to obtain the information which is enough to recover naturally the homology algebra. Namely, we need to add the map
\[
\tilde \beta:{}_2 A \longrightarrow A/2 
\]
which is the composition of the embedding ${}_2A \hookrightarrow A$ and the projection $A\twoheadrightarrow A/2.$

We describe the cohomology  of a finitely generated abelian group $A$ as follows. We set 
$$T(A)=(A/2)^\vee[1]\oplus ({}_2A)^\vee[2]$$ 
and prove that there is a natural isomorphism
$$H^*(A,\F2)\cong \Sym(T(A))/I,$$
where $I$ is the ideal generated by the set
$ \{x^2-\tilde \beta^\vee(x) \mid x\in (A/2)^\vee \}.$ We also prove that there is a unique structure of unstable $\mathscr A$-algebra  (where $\mathscr A$ is the Steenrod algebra) on $\Sym(T(A))/I$  such that $Sq^1(({}_2A)^\vee[2])=0$ and the isomorphism is an isomorphism of unstable $\mathscr A$-algebras. We also prove that there is a short exact sequence of bicommutative Hopf algebras
\[\F2 \longrightarrow \Sym(({}_2A)^\vee) \longrightarrow H^*(A,\F2) \longrightarrow \Lambda ((A/2)^\vee) \longrightarrow \F2. \]
  Moreover, we prove that the group $H^n(A,\F2)$ is naturally isomorphic to the cokernel of a map
\[\bigoplus_{2k+l+2m=n; \ k\geq 1} \Sym^k(A/2^\vee) \otimes \Sym^l(A/2^\vee)\otimes \Sym^m({}_2A^\vee) \longrightarrow \bigoplus_{i+2j=n} \Sym^i(A/2^\vee)\otimes \Sym^j({}_2A^\vee)  \]
and that
there is a natural filtration of $H^n(A,\F2)$ by subfunctors 
$\Phi^i$
such that 
$\Phi^i/\Phi^{i+1}\cong \Lambda^{n-2i}((A/2)^\vee)\otimes \Sym^i(({}_2A)^\vee).$
All the results can be generalized to the case of arbitrary abelian group $A$ if we consider cohomology $(A/2)^\vee$ and $ ({}_2A)^\vee$ as profinite vector spaces and replace all the constructions to their profinite versions.

We describe the homology of an abelian group $A$ (not necessary finitely generated) as follows. The following square is a pullback in the category of bicommutative Hopf algebras
$$
\begin{tikzcd} 
H_*(A,\F2)\arrow[rr] \arrow[d] && \Gamma({}_2A[2])\arrow[d,"\Gamma(\tilde \beta)"] \\
\Gamma(A/2[1])\arrow[rr,"{\mathcal V}"] && \Gamma(A/2[2]),
\end{tikzcd}
$$
where $\mathcal V$ denotes the Verschiebung. We also prove that there is a short exact sequence of graded bicommutative Hopf algebras
\[\F2 \longrightarrow \Lambda(A/2) \longrightarrow H_*(A,\F2) \longrightarrow \Gamma({}_2 A) \longrightarrow \F2.\]
Moreover 
$H_n(A,\F2)$ is naturally isomorphic to the kernel of a natural transformation 
 \[
\bigoplus_{i+2j=n} \Gamma^i(A/2)\otimes \Gamma^j({}_2A)
\longrightarrow 
\bigoplus_{2k+l+2m=n; \ k\geq 1} \Gamma^k(A/2) \otimes \Gamma^l(A/2)\otimes \Gamma^m({}_2A),  \]
 and
there is a natural filtration of $H_n(A,\F2)$ by subfunctors 
$\Psi_i$
such that 
$\Psi_i/\Psi_{i-1}\cong \Lambda^{n-2i}(A/2)\otimes \Gamma^i({}_2A).$

As an auxiliary result we prove that for a short exact sequence of graded connected bicommutative Hopf algebras
$$K \longrightarrow \mathcal A\longrightarrow \mathcal  B \longrightarrow \mathcal  C \longrightarrow K$$
over a field $K$
and  any $n\geq 0$ there is an isomorphism 
\[
\mathcal A_+^n\mathcal B/\mathcal A_+^{n+1}\mathcal B \cong \mathcal A_+^n/\mathcal A_+^{n+1} \otimes\mathcal  C,
\]
which depends only on the short exact sequence (without a linear splitting). Moreover, the isomorphism is natural by the short exact sequence. Note that the quotients depend only on $\mathcal A$ and $\mathcal C$ and do not depend on the short exact sequence. 

The first named author believes that statements proven in the paper are useful because his article \cite{Ivanov} could be twice shorter, if he could use them. We also think that there are some gaps in arguments in some articles that are filled by our results. For example, Bousfield in \cite{Bousfield92} uses the following statement without any reference: if $A$ is a module over a group $G$ such that $A/p$ and ${}_pA$ are nilpotent $G$-modules, then $H_n(A,\mathbb F_p)$ is a nilpotent $G$-module (see the proof of \cite[Prop. 2.8]{Bousfield92}). This statement is an obvious corollary of the result of Cartan for $p\ne 2.$ Apparently, Bousfild missed the case of $p=2,$ and our results fill the gap.

\section{\bf Filtration of a Hopf algebra associated with a short exact sequence}
In this section we prove that a short exact sequence of graded connected  bicommutative Hopf algebras $K\to\mathcal A \to\mathcal B \to\mathcal C \to K$ over a field $K$ provides a natural filtration on $\mathcal B$ whose quotients depend only on $\mathcal A$ and $\mathcal C$ and do not depend on the short exact sequence.

Let $\alpha : \mathcal A \to \tilde{\mathcal  A}$ be a morphism of augmented algebras.  Assume that $V$ is a vector space and $M$ is a $\tilde{\mathcal  A}$-module. Then for a linear map $f:V\to M$ we denote by $\bar  f$ the $\mathcal A$-module homomophism 
\[\bar  f :\mathcal  A\otimes V \longrightarrow M, \hspace{1cm} 
\bar f(a\otimes v)=\alpha(a)f(v).
\]
It is easy to see that 
$
\bar f(\mathcal A_+^n\otimes V)\subseteq \tilde{\mathcal  A}_+^nM.
$
Then the map $f$ induces a map on the quotients 
\[  f'_n :\mathcal A_+^n/\mathcal A_+^{n+1}\otimes V \longrightarrow \tilde{\mathcal A}_+^nM/\tilde{\mathcal A}_+^{n+1}M.\]
\begin{lemma}\label{lemma_barf=0}
If $\im (f)\subseteq \tilde{\mathcal A}_+M,$ then $f'_n=0$ for any $n\geq 0.$
\end{lemma}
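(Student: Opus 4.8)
The plan is to reduce the vanishing of $f'_n$ to a one-line membership computation on decomposable generators. First I would recall that, by construction, $f'_n$ is the map induced by $\bar f$ on the subquotient $\mathcal A_+^n/\mathcal A_+^{n+1}\otimes V$, so it is enough to evaluate it on an element of the form $\bar a\otimes v$, where $a\in\mathcal A_+^n$ has image $\bar a$ in $\mathcal A_+^n/\mathcal A_+^{n+1}$ and $v\in V$. Unwinding the definitions, $f'_n(\bar a\otimes v)$ is the class of $\bar f(a\otimes v)=\alpha(a)f(v)$ in $\tilde{\mathcal A}_+^n M/\tilde{\mathcal A}_+^{n+1}M$. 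Since such decomposable elements span the domain, it suffices to show that this class is zero for every choice of $a$ and $v$.

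The key step is then to observe that under the hypothesis $\im(f)\subseteq\tilde{\mathcal A}_+ M$ the representative $\alpha(a)f(v)$ already lies in the next filtration term $\tilde{\mathcal A}_+^{n+1}M$. Indeed, because $\alpha$ is a morphism of augmented algebras we have $\alpha(\mathcal A_+^n)\subseteq\tilde{\mathcal A}_+^n$, so $\alpha(a)\in\tilde{\mathcal A}_+^n$, whereas $f(v)\in\tilde{\mathcal A}_+ M$ by assumption. Writing $f(v)=\sum_i b_i m_i$ with $b_i\in\tilde{\mathcal A}_+$ and $m_i\in M$, I get $\alpha(a)f(v)=\sum_i \alpha(a)b_i\,m_i$, where each product satisfies $\alpha(a)b_i\in\tilde{\mathcal A}_+^n\cdot\tilde{\mathcal A}_+=\tilde{\mathcal A}_+^{n+1}$. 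Hence $\alpha(a)f(v)\in\tilde{\mathcal A}_+^{n+1}M$, and its class in $\tilde{\mathcal A}_+^n M/\tilde{\mathcal A}_+^{n+1}M$ vanishes, giving $f'_n=0$.

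There is no real obstacle here: the entire content is the bookkeeping identity $\tilde{\mathcal A}_+^n\cdot\tilde{\mathcal A}_+=\tilde{\mathcal A}_+^{n+1}$ combined with the compatibility $\alpha(\mathcal A_+)\subseteq\tilde{\mathcal A}_+$. The only point deserving attention is the well-definedness already built into the statement, namely that $\bar f(\mathcal A_+^n\otimes V)\subseteq\tilde{\mathcal A}_+^n M$ so that $f'_n$ is meaningful; this is exactly the same computation carried out with the weaker input $f(v)\in M$. The lemma then simply upgrades that inclusion by one filtration degree once $\im(f)$ is known to sit inside $\tilde{\mathcal A}_+ M$.
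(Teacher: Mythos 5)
Your proof is correct and follows essentially the same route as the paper: both arguments reduce to the observation that $\bar f(\mathcal A_+^n\otimes V)\subseteq \alpha(\mathcal A_+^n)\cdot \im(f)\subseteq \tilde{\mathcal A}_+^n\cdot\tilde{\mathcal A}_+M=\tilde{\mathcal A}_+^{n+1}M$, so the induced map on the filtration quotient vanishes. Your write-up is in fact slightly more explicit than the paper's about why $\alpha(a)f(v)$ lands one filtration step deeper, but the content is identical.
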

\begin{proof}
Since $\im(f)\subseteq \tilde{\mathcal A}_+M,$ we have $\im(\bar f)\subseteq \tilde{\mathcal A}_+M.$ Using that $\bar f$ is a homomorphism of $A$-modules, we obtain that 
$\bar  f(\tilde{\mathcal A}_+^n\otimes V)\subseteq \alpha(\tilde{\mathcal A}_+^n)\cdot \im(\tilde f)\subseteq \tilde{\mathcal A}_+^{n+1}M.$ The assertion follows. 
\end{proof}

Assume now that we have a short exact sequence in the category bicommutative Hopf algebras
\begin{equation}\label{eq_sh_ex_seq}
K \longrightarrow \mathcal  A\overset{i}\longrightarrow \mathcal B \overset{\pi}\longrightarrow \mathcal C \longrightarrow K.
\end{equation}
and $\tilde {\mathcal A}={\mathcal A}, \alpha=\id_{\mathcal A}.$
Then ${\mathcal B}$ has a natural structure of an ${\mathcal A}$-module and for any linear map $f:C\to {\mathcal B}$ we obtain an ${\mathcal A}$-module homomorphism 
\[
\bar f : {\mathcal A}\otimes {\mathcal C} \longrightarrow \mathcal B. 
\]

\begin{lemma}\label{lemma_splitting}
Let \eqref{eq_sh_ex_seq}  be a short exact sequence of graded connected commutative and cocommutative Hopf algebras. Assume that $f:{\mathcal C}\to {\mathcal B}$ is a graded linear map such that $\pi f=\id_{\mathcal C}.$ Then 
\[
\bar  f:{\mathcal A}\otimes {\mathcal C} \xrightarrow{\cong } {\mathcal B}, \hspace{1cm} a\otimes c\mapsto a\cdot f(c) 
\]
is an isomorphism of ${\mathcal A}$-modules.  
\end{lemma}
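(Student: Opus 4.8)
The map $\bar f$ is an $\mathcal{A}$-module homomorphism by construction, so the plan is simply to prove that it is bijective, and I would establish surjectivity and injectivity separately using the $\mathcal{A}_+$-adic filtration of $\mathcal{B}$ by the subspaces $\mathcal{A}_+^n\mathcal{B}$. The two external inputs I would rely on are: first, that for the short exact sequence \eqref{eq_sh_ex_seq} one has $\ker\pi=\mathcal{A}_+\mathcal{B}$, so that $\mathcal{C}=\mathcal{B}/\mathcal{A}_+\mathcal{B}$ and $f$ is a graded linear splitting of the quotient map; and second, that since the sequence consists of connected bicommutative Hopf algebras over a field, $i(\mathcal{A})$ is a normal connected sub-Hopf-algebra over which $\mathcal{B}$ is \emph{free} as a module (the classical Milnor--Moore freeness theorem for connected Hopf algebras).

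For surjectivity I would argue by induction on the homological degree. Since $\pi f=\id_{\mathcal{C}}$ and $\ker\pi=\mathcal{A}_+\mathcal{B}$, every $b\in\mathcal{B}$ satisfies $b-f(\pi(b))\in\mathcal{A}_+\mathcal{B}$, i.e. $\mathcal{B}=f(\mathcal{C})+\mathcal{A}_+\mathcal{B}$. Because $\mathcal{A}$ is connected, $\mathcal{A}_+$ lives in strictly positive degrees, so $(\mathcal{A}_+\mathcal{B})_m=\sum_{k\geq 1}\mathcal{A}_k\cdot\mathcal{B}_{m-k}$ only involves $\mathcal{B}$ in degrees below $m$. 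Combining the inductive hypothesis $\mathcal{B}_{m'}=(\im\bar f)_{m'}$ for $m'<m$ with $f(\mathcal{C})_m\subseteq(\im\bar f)_m$ and the fact that $\im\bar f=\mathcal{A}\cdot f(\mathcal{C})$ is an $\mathcal{A}$-submodule yields $\mathcal{B}_m=(\im\bar f)_m$; the base case $m=0$ is $f(1)=1$. Hence $\bar f$ is onto.

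For injectivity, which I expect to be the main obstacle, I would exploit the projectivity of $\mathcal{B}$ over $\mathcal{A}$. Surjectivity shows that the short exact sequence of graded $\mathcal{A}$-modules $0\to\Ker\bar f\to\mathcal{A}\otimes\mathcal{C}\to\mathcal{B}\to 0$ splits, so $\mathcal{A}\otimes\mathcal{C}\cong\mathcal{B}\oplus\Ker\bar f$ as $\mathcal{A}$-modules. Applying $-\otimes_{\mathcal{A}}K$ and using $\mathcal{B}\otimes_{\mathcal{A}}K=\mathcal{B}/\mathcal{A}_+\mathcal{B}=\mathcal{C}$, I would note that the induced map $\bar f\otimes_{\mathcal{A}}K\colon\mathcal{C}\to\mathcal{C}$ sends $\overline{1\otimes c}$ to $\overline{f(c)}=\pi(f(c))=c$ and is therefore the identity. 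Comparing the two descriptions of the split sequence after $-\otimes_{\mathcal{A}}K$ forces $\Ker\bar f\otimes_{\mathcal{A}}K=\Ker\bar f/\mathcal{A}_+\Ker\bar f=0$, and since $\Ker\bar f$ is a non-negatively graded $\mathcal{A}$-module while $\mathcal{A}_+$ raises degree, the graded Nakayama lemma gives $\Ker\bar f=0$.

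Thus the only genuinely substantial ingredient is the freeness of $\mathcal{B}$ over $i(\mathcal{A})$; everything else is a formal consequence of the degreewise nilpotence of the $\mathcal{A}_+$-adic filtration together with Nakayama. One could instead try to filter both sides by powers of $\mathcal{A}_+$, observe that $\bar f$ is filtered with $\mathrm{gr}\,\bar f$ the canonical surjection $\mathcal{A}_+^n/\mathcal{A}_+^{n+1}\otimes\mathcal{C}\to\mathcal{A}_+^n\mathcal{B}/\mathcal{A}_+^{n+1}\mathcal{B}$, and then prove this associated-graded map is injective; but that injectivity is precisely the auxiliary ``graded'' isomorphism announced in the introduction, so such a route risks circularity, and I would keep the freeness-plus-Nakayama argument as the primary one.
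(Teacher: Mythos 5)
Your argument is correct and is essentially the paper's own proof written out in full: the paper simply cites Milnor--Moore (Prop.\ 1.7 for the Nakayama-type surjectivity/injectivity criteria and Thm.\ 4.4 / Prop.\ 4.9 for the freeness of $\mathcal B$ over $\mathcal A$), which are exactly the two ingredients you identify and reprove. Your closing remark about avoiding circularity with the associated-graded statement is also well taken.
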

\begin{proof} It follows from Proposition 1.7  of \cite{MM}. See also the proof of Theorem 4.4 and Proposition 4.9 of \cite{MM}. 
\end{proof}
\begin{theorem}\label{th_nat_filt_gen}
For a short exact sequence of graded connected bicommutative Hopf algebras
$$K \longrightarrow {\mathcal A}\xrightarrow{\ i \ } {\mathcal B} \xrightarrow{\ \pi \ } {\mathcal C} \longrightarrow K$$
and  any $n\geq 0$ there is an isomorphism 
\begin{equation}\label{eq_filt_iso_gen}
{\mathcal A}_+^n{\mathcal B}/{\mathcal A}_+^{n+1}{\mathcal B} \cong {\mathcal A}_+^n/{\mathcal A}_+^{n+1} \otimes {\mathcal C},
\end{equation}
which depends only on the short exact sequence (without a linear splitting). Moreover, the isomorphism is natural by the short exact sequence. 
\end{theorem}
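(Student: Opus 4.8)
The plan is to reduce the statement to the choice of a graded linear section of $\pi$ and then use \Cref{lemma_splitting} to build the isomorphism and \Cref{lemma_barf=0} to show both that it is independent of the section and that it is natural. First I would observe that, since $K$ is a field and $\pi\colon\mathcal B\to\mathcal C$ is a degreewise surjection of graded vector spaces, a graded linear section $f\colon\mathcal C\to\mathcal B$ with $\pi f=\id_{\mathcal C}$ exists (pick one degree by degree). Applying \Cref{lemma_splitting} to such an $f$, the map $\bar f\colon\mathcal A\otimes\mathcal C\to\mathcal B$ is an isomorphism of $\mathcal A$-modules. Since the $\mathcal A$-action on $\mathcal A\otimes\mathcal C$ is multiplication on the first factor and $\mathcal A_+^n\mathcal A=\mathcal A_+^n$, we have $\mathcal A_+^n(\mathcal A\otimes\mathcal C)=\mathcal A_+^n\otimes\mathcal C$, and as $\bar f$ is a module isomorphism it carries this submodule isomorphically onto $\mathcal A_+^n\cdot\bar f(\mathcal A\otimes\mathcal C)=\mathcal A_+^n\mathcal B$. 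Passing to quotients then produces the isomorphism $f'_n$ realizing \eqref{eq_filt_iso_gen}.

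Next I would prove that $f'_n$ does not depend on the chosen section, and the key point here is the identification $\ker\pi=\mathcal A_+\mathcal B$, which is exactly what the definition of a short exact sequence of Hopf algebras provides (namely $\mathcal C=\mathcal B/\mathcal A_+\mathcal B$). If $f,g\colon\mathcal C\to\mathcal B$ are two graded sections, then $\pi(f-g)=0$, so $\im(f-g)\subseteq\ker\pi=\mathcal A_+\mathcal B$. Taking $\tilde{\mathcal A}=\mathcal A$, $\alpha=\id_{\mathcal A}$ and $M=\mathcal B$ in \Cref{lemma_barf=0} gives $(f-g)'_n=0$; since the assignment $f\mapsto f'_n$ is linear, this yields $f'_n=g'_n$. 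Hence $f'_n$ depends only on the short exact sequence.

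For naturality I would take a morphism of short exact sequences, i.e.\ Hopf-algebra maps $\phi_{\mathcal A},\phi_{\mathcal B},\phi_{\mathcal C}$ commuting with $i,\pi$ and $i',\pi'$, and I would choose \emph{arbitrary} graded sections $f$ of $\pi$ and $f'$ of $\pi'$ (there is no need to make them compatible). Consider the two linear maps $h=\phi_{\mathcal B}f$ and $h'=f'\phi_{\mathcal C}$ from $\mathcal C$ to $\mathcal B'$. A short computation using $\pi'\phi_{\mathcal B}=\phi_{\mathcal C}\pi$ and $\pi'f'=\id$ gives $\pi'(h-h')=0$, so $\im(h-h')\subseteq\ker\pi'=\mathcal A'_+\mathcal B'$. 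Applying \Cref{lemma_barf=0} this time with $\alpha=\phi_{\mathcal A}$ and $M=\mathcal B'$ gives $h'_n=(h')'_n$. Unwinding the definitions, $h'_n$ is the $\phi_{\mathcal B}$-induced map composed after $f'_n$, while $(h')'_n$ is $(f')'_n$ composed after the map induced by $\phi_{\mathcal A}\otimes\phi_{\mathcal C}$; so the equality $h'_n=(h')'_n$ is precisely the commutativity of the naturality square.

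The main obstacle is conceptual rather than computational: a priori the isomorphism \eqref{eq_filt_iso_gen} is only defined after choosing a section, and sections cannot in general be chosen compatibly along a morphism of sequences. The device that dissolves this difficulty is \Cref{lemma_barf=0}, which shows that any ambiguity in a section takes values in the filtration-raising ideal $\mathcal A_+\mathcal B$ and therefore vanishes on the associated graded. This single mechanism handles both the well-definedness and the naturality, so once the two lemmas are in place the remaining work is only the routine bookkeeping indicated above.
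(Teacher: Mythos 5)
Your proposal is correct and follows essentially the same route as the paper's own proof: choose a graded linear section, apply \Cref{lemma_splitting} to get the isomorphism, and use \Cref{lemma_barf=0} with $\ker\pi=\mathcal A_+\mathcal B$ both for independence of the section and for naturality. The only differences are cosmetic (your direct computation $\pi'(h-h')=0$ is in fact slightly cleaner than the paper's detour through $\im(h)=\im(h\pi)$).
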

\begin{proof}
If we take any graded linear splitting  $f:{\mathcal C}\to {\mathcal B}$ of the epimorphism $\pi:{\mathcal B}\to {\mathcal C},$ then by 
\Cref{lemma_splitting} we obtain that $\bar f:{\mathcal A}\otimes {\mathcal C} \cong {\mathcal B}$ is an isomorphism of ${\mathcal A}$-modules. It follows that $f'_n$ is an isomorphism of the form \eqref{eq_filt_iso_gen}. 

Prove that this isomorphism does not depend on the choice  of the splitting $f.$ The difference of two such splittings is a map $g:{\mathcal C}\to {\mathcal B}$ such that $\im g \subseteq \Ker \pi.$ Then it is enough to prove that for any linear map $g:{\mathcal C}\to {\mathcal B}$ such that $\im g \subseteq \Ker \pi={\mathcal A}_+{\mathcal B}$ we have $g'_n=0.$ This follows from \Cref{lemma_barf=0}
 So, the isomorphism \eqref{eq_filt_iso_gen} does not depend of the choice of $f.$ 
 
Prove that this isomorphism is natural by the short exact sequence. Assume that we have a morphism of short exact sequences
\[
\begin{tikzcd}
K\arrow[r] & {\mathcal A}\arrow[r]\arrow[d,"\alpha_{\mathcal A}"] & {\mathcal B}\arrow[r,"\pi"]\arrow[d,"\alpha_{\mathcal B}"] & {\mathcal C}\arrow[r]\arrow[d,"\alpha_{\mathcal C}"] & K\\
K \arrow[r]& \tilde {\mathcal A}\arrow[r] & \tilde {\mathcal B}\arrow[r,"\tilde \pi"] & \tilde {\mathcal C}\arrow[r] & K
\end{tikzcd}
\]
Fix a splitting $f$ of $\pi$ and a splitting $g$ of $\tilde \pi.$ Set $h=\alpha_{\mathcal B} f - g \alpha_{\mathcal C}.$
We claim that  $\im (h)\subseteq \Ker \tilde \pi.$ Indeed, ths follows from $\im(h)=\im(h\pi)$ and $\tilde \pi h\pi=\tilde \pi \alpha_{\mathcal B} f \pi - \tilde \pi g \alpha_{\mathcal C} \pi=\alpha_{\mathcal C} \pi h \pi - \tilde \pi g  \tilde \pi \alpha_{\mathcal B}=\alpha_{\mathcal C}\pi - \tilde \pi \alpha_{\mathcal B}=0.$ Then $\im(h)\subseteq \tilde {\mathcal A}_+\tilde {\mathcal B}.$ \Cref{lemma_barf=0} implies $h'_n=0.$ It follows that the diagram 
\[
\begin{tikzcd}
({\mathcal A}^n_+/{\mathcal A}^{n+1}_+)\otimes {\mathcal C}\arrow[r,"f'_n"]\arrow[d] & {\mathcal A}_+^n{\mathcal B} /{\mathcal A}_+^{n+1}{\mathcal B}  \arrow[d]\\
(\tilde {\mathcal A}_+^n/\tilde {\mathcal A}_+^{n+1})\otimes \tilde {\mathcal C}\arrow[r,"g'_n"] & \tilde {\mathcal A}_+^n\tilde {\mathcal B} /\tilde {\mathcal A}_+^{n+1}\tilde {\mathcal B}
\end{tikzcd}
\]
is commutative. The assertion follows. 
\end{proof}

\section{\bf Generalities about Bockstein homomorphism and the universal coefficient theorem}

For a space $X$ we denote by
$$\beta:H_n(X,\ZZ/p)\longrightarrow H_{n-1}(X,\ZZ/p)$$ 
the Bockstein homomorphism. The short exact sequence $\ZZ \overset{\cdot p}\mono \ZZ \epi \ZZ/p$ induces a boundary map
$\beta':H_n(X,\ZZ/p) \to H_{n-1}(X,\ZZ),$ whose composition with the map $H_{n-1}(X,\ZZ)\to H_{n-1}(X,\ZZ/p)$ is $\beta$ \cite[\S 3E]{Hatcher}.  The universal coefficient theorem says that there is a short exact sequence
\[ 0\longrightarrow H_n(X,\ZZ)/p \overset{\theta_n} \longrightarrow H_n(X,\ZZ/p) \overset{\tau_n}\longrightarrow 
{}_pH_{n-1}(X,\ZZ) \longrightarrow 0,\]
where the map $\tau_n$ is constructed as the restriction of the image of $\beta'$.
 \[
\begin{tikzcd}
H_{n}(X,\ZZ/p)\arrow[dr,dashed,"\tau_n"'] \arrow[rr,"\beta'"] && H_{n-1}(X,\ZZ) \arrow[r,"\cdot p"]  & H_{n-1}(X,\ZZ) \\
& {}_pH_{n-1}(X,\ZZ)\arrow[ru,hookrightarrow] & &  
\end{tikzcd}
\]
This follows that $\beta$ equals the following composition 
\[ 
\beta:\ \ \  H_n(X,\ZZ/p) \xrightarrow{\tau_n} {}_p H_{n-1}(X,\ZZ) \hookrightarrow H_{n-1}(X,\ZZ) \rightarrow H_{n-1}(X,\ZZ/p). 
\] 
The last homomorphism can be also decomposed as $H_{n-1}(X,\ZZ)\epi H_{n-1}(X,\ZZ)/p \xrightarrow{\theta_{n-1}} H_{n-1}(X,\ZZ/p).$ This implies the following lemma.

\begin{lemma}\label{lemma_bockstein} The Bockstein homomorphism $\beta$ decomposes via homomorphisms from the universal coefficient theorem as follows
\begin{equation}
\begin{tikzcd}
H_n(X,\ZZ/p)\arrow[d,"\tau_n",twoheadrightarrow] \arrow[r,"\beta"] & H_{n-1}(X,\ZZ/p) \\
{}_pH_{n-1}(X,\ZZ)\arrow[r,"\tilde \beta"] & H_{n-1}(X,\ZZ)/p \arrow[u,"\theta_{n-1}",rightarrowtail], 
\end{tikzcd}
\end{equation}
 where $\tilde \beta$ is the composition of the embedding ${}_pH_{n-1}(X,\ZZ) \hookrightarrow H_{n-1}(X,\ZZ)$ and the projection $H_{n-1}(X,\ZZ) \epi H_{n-1}(X,\ZZ)/p.$ 
\end{lemma}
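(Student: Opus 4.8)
The plan is to obtain the decomposition by gluing together the two factorizations that the discussion preceding the statement already supplies: one expressing the integral Bockstein $\beta'$ through the torsion subgroup, and one expressing the coefficient reduction through the cotorsion quotient. No input beyond the long exact sequence of the coefficient sequence $\ZZ\overset{\cdot p}\mono\ZZ\epi\ZZ/p$ and the universal coefficient theorem is required.

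First I would record the relation $\beta = \rho \circ \beta'$, where $\beta':H_n(X,\ZZ/p)\to H_{n-1}(X,\ZZ)$ is the connecting map of the coefficient sequence and $\rho:H_{n-1}(X,\ZZ)\to H_{n-1}(X,\ZZ/p)$ is induced by the reduction $\ZZ\epi\ZZ/p$; this is the identity recalled from \cite[\S 3E]{Hatcher}. On the source side, exactness of the long exact sequence makes the composite of $\beta'$ with multiplication by $p$ on $H_{n-1}(X,\ZZ)$ vanish, so $\im(\beta')\subseteq {}_pH_{n-1}(X,\ZZ)$; restricting the codomain is precisely how $\tau_n$ is defined, which gives $\beta'=\iota\circ\tau_n$ with $\iota:{}_pH_{n-1}(X,\ZZ)\hookrightarrow H_{n-1}(X,\ZZ)$ the inclusion. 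On the target side, $\rho$ is induced by $\ZZ\epi\ZZ/p$ and hence annihilates $p\cdot H_{n-1}(X,\ZZ)$, so it factors as $\rho=\theta_{n-1}\circ q$ through the projection $q:H_{n-1}(X,\ZZ)\epi H_{n-1}(X,\ZZ)/p$; this factored map is exactly the map $\theta_{n-1}$ of the universal coefficient theorem.

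Combining the two factorizations yields $\beta = \rho\circ\beta' = \theta_{n-1}\circ q\circ\iota\circ\tau_n$, and since $\tilde\beta=q\circ\iota$ is by definition the composite of the inclusion of the $p$-torsion with the projection onto $H_{n-1}(X,\ZZ)/p$, this is the asserted identity $\beta=\theta_{n-1}\circ\tilde\beta\circ\tau_n$, i.e. the commutativity of the square. I expect no genuine obstacle here, as the argument is pure assembly; the only point to verify with care is that the map $\theta_{n-1}$ in the universal coefficient sequence really does coincide with the factorization of $\rho$ through $H_{n-1}(X,\ZZ)/p$ rather than differing from it, but this is immediate from the standard construction of $\theta$, which the preceding discussion already records.
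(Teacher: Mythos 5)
Your proposal is correct and follows essentially the same route as the paper: the text preceding the lemma records exactly the two factorizations $\beta=\rho\circ\beta'$ with $\beta'=\iota\circ\tau_n$ and $\rho=\theta_{n-1}\circ q$, and the lemma is obtained by assembling them, just as you do. No gaps.
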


\section{\bf Cohomology}

For an abelian group $A$ we set 
$T(A)=(A/2)^\vee[1] \oplus ({}_2A)^\vee[2].$
\begin{theorem}\label{th_cohom} Let $A$ be a finitely generated abelian group. Then
\begin{itemize} 
\item[(a)] There is a natural isomorphism of graded Hopf algebras
\begin{equation}\label{eq_th_morphism}
H^*(A,\F2)\cong \Sym(T(A))/I,
\end{equation}
where $I$ is the ideal generated by the set 
$\{ x^2 -  \tilde \beta^\vee(x) \mid x\in (A/2)^\vee \}.$
\item[(b)]
The following square is a pushout and pullback in the category of graded bicommutative Hopf algebras 
\begin{equation}\label{eq_th_square}
\begin{tikzcd}
\Sym((A/2)^\vee[2]) \arrow[d,"\Sym(\tilde \beta^\vee)"] \arrow[rr,"\mathcal F"] && \Sym((A/2)^\vee[1]) \arrow[d]\\
\Sym(({}_2A)^\vee[2]) \arrow[rr] && H^*(A,\F2),
\end{tikzcd}
\end{equation}
where $\mathcal F$ is the Frobenious homomorphism.

\item[(c)] There exists a unique structure of unstable $\mathscr A$-algebra on $\Sym(T(A))/I$ such that $Sq^1(x)=0$ for any $x\in ({}_2A)^\vee.$ The morphism \eqref{eq_th_morphism} is an isomorphism of unstable $\mathscr A$-algebras with respect to this structure. 
\end{itemize}
\end{theorem}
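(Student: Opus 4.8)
The plan is to establish Theorem~\ref{th_cohom} in three stages corresponding to the three parts, building the algebra isomorphism first, then upgrading it to a square of Hopf algebras, and finally fixing the Steenrod-algebra structure.

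For part (a), I would start from the known computation of $H^*(A,\F2)$ via the structure theorem for finitely generated abelian groups: $A\cong \ZZ^r\oplus\bigoplus_j \ZZ/2^{e_j}\oplus(\text{odd torsion})$, and the odd torsion contributes nothing mod $2$. Since both sides of \eqref{eq_th_morphism} are expected to take products to tensor products (cohomology via the Künneth formula, and $\Sym(T(A))/I$ via the additivity of $T$ and compatibility of the relation ideal with direct sums), it suffices to verify the isomorphism on the cyclic generators $\ZZ$ and $\ZZ/2^e$. For $A=\ZZ$ one has $H^*(\ZZ,\F2)=\Lambda((A/2)^\vee[1])$ with ${}_2A=0$, and indeed $\tilde\beta=0$ and $T(\ZZ)=(A/2)^\vee[1]$, so $\Sym(T)/I$ becomes the exterior algebra since the relation forces $x^2=0$. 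For $A=\ZZ/2^e$ one has $H^*(\ZZ/2^e,\F2)=\F2[y]\otimes\Lambda(x)$ (polynomial on a degree-$2$ class, exterior on a degree-$1$ class) when $e\geq 2$, whereas for $e=1$ it is the polynomial algebra $\F2[x]$ on a single degree-$1$ class; the distinction is exactly detected by $\tilde\beta$, which is an isomorphism ${}_2A\to A/2$ precisely when $e=1$. The key computation is to read off $\tilde\beta$ from Lemma~\ref{lemma_bockstein}, identifying it with the Bockstein, so that the relation $x^2=\tilde\beta^\vee(x)$ reproduces exactly the relation $x^2=y$ (when $\tilde\beta^\vee(x)\ne 0$) or $x^2=0$ (when $\tilde\beta^\vee(x)=0$) in the cyclic cases. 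Naturality then reduces to checking it on generators and on the summation and diagonal maps.

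For part (b), I would verify directly that the square \eqref{eq_th_square} commutes and then invoke a general pushout/pullback criterion for short exact sequences of connected bicommutative Hopf algebras, using Theorem~\ref{th_nat_filt_gen} together with the short exact sequence $\F2\to\Sym(({}_2A)^\vee)\to H^*(A,\F2)\to\Lambda((A/2)^\vee)\to\F2$ advertised in the introduction. Commutativity amounts to the observation that in $H^*(A,\F2)$ the image of a degree-$1$ class $x$ under the Frobenius (squaring) equals the image of $\tilde\beta^\vee(x)$, which is precisely the defining relation of $I$; this is where part~(a) feeds in. To get the pushout, I would show that $\Sym(T(A))/I$ is generated as an algebra by the images of the two bottom-left corners and that any compatible pair of Hopf maps out of them factors uniquely, using the presentation from~(a); the pullback statement is then dual, checking that the fiber product of $\Sym(({}_2A)^\vee[2])$ and $\Sym((A/2)^\vee[1])$ over $\Sym((A/2)^\vee[2])$ has the right size degree by degree, again via the filtration quotients $\Lambda^{n-2i}\otimes\Sym^i$ coming from Theorem~\ref{th_nat_filt_gen}.

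For part (c), uniqueness is the easy half: $\Sym(T(A))/I$ is generated in degrees $1$ and $2$, the unstable axioms force $Sq^1$ and $Sq^2$ on degree-$1$ classes (with $Sq^1x=\tilde\beta^\vee(x)$ forced by the relation $x^2=Sq^1x$, since $Sq^1$ is the algebraic Bockstein), and the Cartan formula then propagates the action to all products, so demanding $Sq^1=0$ on $({}_2A)^\vee[2]$ pins down the whole action. For existence, rather than checking the Adem and Cartan relations by hand I would transport the genuine Steenrod action along the isomorphism of~(a): $H^*(A,\F2)$ is an unstable $\mathscr A$-algebra, and I only need to confirm that under~\eqref{eq_th_morphism} the topological $Sq^1$ vanishes on the degree-$2$ classes coming from $({}_2A)^\vee$. \textbf{I expect this last vanishing to be the main obstacle}, since it requires identifying the degree-$2$ generators coming from $({}_2A)^\vee$ with integral (Bockstein-trivial) classes — concretely, the classes $\theta_2$ in the image of $H^2(A,\ZZ)/2\hookrightarrow H^2(A,\F2)$ from the universal coefficient theorem are exactly those killed by $Sq^1=\beta$, and one must check that these account for the $({}_2A)^\vee[2]$ summand rather than the squares $x^2$ of degree-$1$ classes. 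Lemma~\ref{lemma_bockstein} is the tool that separates these two sources of degree-$2$ classes, and making that separation precise and natural is where the real work lies.
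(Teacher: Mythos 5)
The central gap is in part (a): you never construct a natural comparison map, and without one the reduction to cyclic groups cannot prove the theorem. Checking that $H^*(A,\F2)$ and $\Sym(T(A))/I$ are abstractly isomorphic for $A=\ZZ$ and $A=\ZZ/2^e$ and then invoking Künneth/additivity only yields an \emph{object-wise} isomorphism, assembled through the direct-sum decomposition of $A$ and through choices of generators of the cyclic factors — both of which are unnatural. Your closing claim that ``naturality then reduces to checking it on generators and on the summation and diagonal maps'' is not a valid reduction: naturality must hold against every homomorphism $A\to B$, and, more fundamentally, there is no uniformly defined map whose naturality one could check. The paper itself shows why this matters: the sequence $0\to\Lambda^2(A/2)\to H_2(A,\F2)\to{}_2A\to0$ splits object-wise for every finitely generated $A$ but admits no natural splitting (see \Cref{prop_non_existence} and the reference to Mikhailov in the introduction), so object-wise agreement on all finitely generated groups — let alone on cyclic ones — does not imply a natural isomorphism. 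The paper's proof exists precisely to close this gap: it first builds the natural morphism, by dualizing the universal coefficient theorem to get natural maps $(\theta^1)^{-1}:(A/2)^\vee\to H^1(A,\F2)$ and $\tau^2:({}_2A)^\vee\to H^2(A,\F2)$, proving their images are primitive (naturality with respect to the addition map $A\oplus A\to A$), extending to Hopf-algebra maps $\Sym((A/2)^\vee[1])\to H^*(A,\F2)$ and $\Sym(({}_2A)^\vee[2])\to H^*(A,\F2)$, and using \Cref{lemma_bockstein} to see that the square \eqref{eq_th_square} commutes; since bicommutative Hopf algebras form an abelian category (Newman, Takeuchi) in which the pushout of the two left/top maps is exactly $\Sym(T(A))/I$, this yields a \emph{natural} morphism $\Sym(T(A))/I\to H^*(A,\F2)$. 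Only then does additivity legitimately reduce the isomorphism question to cyclic groups. Note also that the paper proves (a) and (b) simultaneously by this route, whereas your plan derives (b) from (a); without the universal-property construction the candidate map in (a) does not exist.

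Part (b) as you set it up is additionally circular: the short exact sequence $\F2\to\Sym(({}_2A)^\vee)\to H^*(A,\F2)\to\Lambda((A/2)^\vee)\to\F2$ that you propose to feed into \Cref{th_nat_filt_gen} is \Cref{cor_short_exact} of the paper, which is \emph{deduced from} the pushout in \Cref{th_cohom}(b); moreover \Cref{th_nat_filt_gen} produces a filtration from a given short exact sequence — it is not a pushout/pullback criterion. (The paper's pullback argument is also simpler than a degree-by-degree size count: in an abelian category a pushout square is a pullback iff the first map of its totalization is monic, which here is the Frobenius $\Sym((A/2)^\vee)\to\Sym((A/2)^\vee)$.) Your part (c) is structurally sound and close to the paper — uniqueness via instability ($Sq^1x=x^2=\tilde\beta^\vee(x)$ in degree $1$) plus the Cartan formula, existence by transporting the topological action — and you correctly isolate the crux, namely $Sq^1\bigl(\im(\tau^2:({}_2A)^\vee\to H^2(A,\F2))\bigr)=0$; but you leave exactly that point unproven (``where the real work lies''), and your sketch via integral lifts would still need a genuine argument. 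The paper settles it in two lines: by naturality of $\tau^2$ along the inclusion ${}_2A\hookrightarrow A$ one reduces to the elementary abelian group $V={}_2A$, where $\tau^2_V(x)=Sq^1(x')$ for $x'$ the corresponding degree-one class, so $Sq^1\tau^2_V(x)=Sq^1Sq^1(x')=0$.
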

\begin{proof}[Proof of Theorem \ref{th_cohom}]
 Dualizing the universal coefficient theorem we obtain a short exact sequence
\[0 \longrightarrow ({}_2 H_{n-1}(A,\ZZ))^\vee \overset{\tau^n}\longrightarrow H^n(A,\F2) \overset{\theta^n}\longrightarrow (H_n(A,\ZZ)/2)^\vee \longrightarrow 0.
 \]
If we identify $H_1(A,\ZZ)= A,$ we get morphisms 
\begin{equation}\label{eq_maps_to_prim}
(\theta^1)^{-1}:(A/2)^\vee \overset{\cong}\longrightarrow H^1(A,\F2), \hspace{1cm} \tau^2:({}_2A)^\vee \mono H^2(A,\F2).
\end{equation}
These maps are natural by $A.$ The diagram of naturality of these morphisms with respect to the homomorphism of addition $A\oplus A \to A$ implies that the images of the maps \eqref{eq_maps_to_prim} consist of primitive elements. Then they give morphisms of Hopf algebras from symmetric algebras 
\begin{equation}\label{eq_homomorphisms_from_symmetric}
\Sym((A/2)^\vee[1]) \to H^*(A,\F2), \hspace{1cm} \Sym(({}_2A)^\vee[2]) \to H^*(A,\F2).
\end{equation}

The Bockstein homomorphism in degree one $\beta:H^1(A,\F2) \to H^2(A,\F2)$ equals the Frobenius homomorphism. Then Lemma \ref{lemma_bockstein} implies that the following diagram is commutative
\[
\begin{tikzcd}
(A/2)^\vee \arrow[d,"\cong"] \arrow[rr,"\tilde \beta^\vee"] && ({}_2A)^\vee \arrow[d,rightarrowtail]\\
H^1(A,\F2) \arrow[rr,"{x\mapsto x^2}"] && H^2(A,\F2).
\end{tikzcd}
\]
It follows that the square of Hopf algebras
\eqref{eq_th_square}
is commutative. It is known \cite[Th. 4.4]{Newman}, \cite[Cor. 4.16]{Takeuchi} that the category of bicommutative Hopf algebras is abelian, the direct sum is given by the tensor product and cokernel of a morphism $\alpha: C\to D$ is given by $D/\alpha(C_+)D,$ where $C_+$ is the augmentation ideal of $C.$ 
It is easy to check that a commutative square in the category of bicommutative graded Hopf algebras is pushout if and only if it is pushout in the category of bicommutative (non-graded) Hopf algebras. Therefore the pushout of the morphisms $\mathcal F$ and $\Sym(\tilde \beta)$ in the diagram \eqref{eq_th_square} is $\Sym(T(A))/I$ (here we use that in an abelian category the pushout of two maps is the obvious quotient of their direct sum and that $\Sym(T(A))\cong \Sym((A/2)^\vee[1])\otimes \Sym(({}_2A)^\vee[2])$). Therefore we obtain a natural morphism of graded Hopf algebras 
$$\Sym(T(A))/I\to H^*(A,\F2)$$
and the square \eqref{eq_th_square} is a pushout if and only if the morphism is an isomorphism.

All functors in the diagram  \eqref{eq_th_square} are additive as functors from the category of abelian groups to the category of bicommutative Hopf algebras (they send direct sums to tensor products). It follows that in order to prove that the square is a pushout for finitely generated abelian groups, it is enough to prove that the square is a pushout for cyclic groups $A=\ZZ,\ZZ/p^n,$ where $p$ is prime and $n\geq 1$. 
If $p\ne 2$ and $n\geq 1,$ then all algebras in the square are trivial. The case $A=\ZZ$ follows from the standard fact that $H^*(\ZZ,\F2)\cong \F2[x]/(x^2).$ The other cases $\ZZ/2$ and $\ZZ/2^n$ for $n\geq 2$  follow from the isomorphisms 
 $H^*(\ZZ/2,\F2)\cong \F2[x],$ $H^*(\ZZ/2^n,\F2)\cong \F2[x,y]/(x^2),$ where ${\sf deg}(x)=1$ and ${\sf deg}(y)=2 $ (see {\cite[\S 3.2]{Evens}}). 
 
Recall that a square in an abelian category is a pushout (pullback) if and only if its totalization is a right (left) exact sequence. So, if a square is a pushout, then it is a pullback if and only if the first arrow of its totalization is a monomorphism. Therefore the fact that our square is also a pullback follows from the fact that the Frobenius homomorphism $\Sym((A/2)^\vee)\to \Sym((A/2)^\vee)$ is a monomorphism. 
 
We proved (a) and (b). Now we prove (c). The Verschiebung on the Steenrod algebra $v:\mathscr A \to \mathscr A$ is an endomorphism that acts as follows $v(Sq^{2n})=Sq^n$ and $v(Sq^{2n+1})=0$ 
(see \cite[Ch. II., Prop. 3.5]{SteenrodEpstein}). For any vector space $V$ there is a unique structure of unstable $\mathscr A$-algebra on $\Sym(V[1])$ which comes from the isomorphism $\Sym(V[1])\cong H^*(V,\F2).$ The action via Verschiebung gives a structure of unstable $\mathscr A$-algebra on $\Sym(V[2]).$ Using
 the natural structure of Hopf algebra on $\mathscr A,$ we obtain that there is a natural structure of unstable $\mathscr A$-algebra on the tensor product $\Sym(V[1])\otimes \Sym(U[2])\cong \Sym(V[1]\oplus U[2])$ for any two vector spaces $V,U$. This gives a structure of $\AA$-module on $\Sym(T(A)).$ Prove that the ideal $I$ is closed under this action. It is enough to prove that the image of the generating set with respect to the action of $Sq^n$ lies in $I.$ This follows from the equations $Sq^1(x^2 - \tilde \beta^\vee (x) )=0$ and $Sq^2(x^2 - \tilde \beta^\vee (x) )=(x^2 - \tilde \beta^\vee (x))^2.$ So, we obtain a required structure of unstable $\AA$-algebra on $\Sym(T(A))/I.$ It is easy to see that it is unique.  In order to prove that the isomorphism $\Sym(T(A))/I\cong H^*(A,\F2)$ respects the action of $\AA,$ it is enough to prove if for elements from $T(A).$ Since they are both unstable $\AA$-algebras, the action of $Sq^1$ on $(A/2)^\vee[1]$ and the action of $Sq^2$ on $({}_2A)^\vee[2]$ are respected by the isomorphism. So we only need to prove that the isomorphism respects the action of $Sq^1$ on $({}_2A)^\vee[2]$, which is trivial. So we need to prove that $Sq^1({\rm I}{\sf m}\  (\tau^2_A:({}_2A)^\vee \to H^2(A,\F2) ))=0.$ Consider the vector space $V={}_2A.$ Then the embedding $V \hookrightarrow A$ induces a commutative square 
$$
\begin{tikzcd}
V^\vee\arrow[d,"\tau^2_V"]\arrow[rr,"{\sf id}"] && {}_2A^\vee\arrow[d,"\tau^2_A"] \\
H^2(V,\F2)\arrow[rr] && H^2(A,\F2)
\end{tikzcd}
$$
Hence the assertion follows from the equation $Sq^1(\tau^2_V(x))=Sq^1Sq^1(x')=0,$ where $x'$ is the image of $x$ under the isomorphism $V^\vee\cong H^1(V,\F2).$   
\end{proof}

\begin{corollary}\label{cor_short_exact} For a finitely generated abelian group $A$ there is a natural short exact sequence of Hopf algebras
\[\F2 \longrightarrow \Sym(({}_2A)^\vee) \longrightarrow H^*(A,\F2) \longrightarrow \Lambda ((A/2)^\vee) \longrightarrow \F2. \]
\end{corollary}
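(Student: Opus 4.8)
The plan is to read off the short exact sequence directly from the bicartesian square \eqref{eq_th_square} established in \Cref{th_cohom}(b). The map $\Sym(({}_2A)^\vee)\to H^*(A,\F2)$ appearing in the statement is precisely the bottom horizontal arrow $c$ of that square (with $({}_2A)^\vee$ placed in degree $2$). So it suffices to show that $c$ is a monomorphism of bicommutative Hopf algebras and that its cokernel is $\Lambda((A/2)^\vee)$: in the abelian category of bicommutative Hopf algebras a monomorphism together with its cokernel automatically assembles into a short exact sequence $\F2\to \Sym(({}_2A)^\vee)\to H^*(A,\F2)\to\Lambda((A/2)^\vee)\to\F2$ (a monomorphism is the kernel of its cokernel), and naturality in $A$ is inherited from the naturality of \eqref{eq_th_square}.

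First I would record a general fact about abelian categories. A commutative square with edges $A\xrightarrow{\phi}B$, $A\xrightarrow{\psi}C$, $B\xrightarrow{b}D$, $C\xrightarrow{c}D$ is bicartesian (both pullback and pushout) if and only if the sequence $0\to A\to B\oplus C\to D\to 0$, with the obvious maps, is short exact. From this characterization a short diagram chase shows that if $\phi$ is a monomorphism then the opposite map $c$ is also a monomorphism and that $b$ induces an isomorphism $\mathrm{coker}(\phi)\xrightarrow{\ \cong\ }\mathrm{coker}(c)$. This is the engine of the argument: it transports the easy analysis of the top edge of \eqref{eq_th_square} onto the desired bottom edge.

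It then remains to analyse the top edge, the Frobenius $\mathcal F\colon \Sym((A/2)^\vee[2])\to\Sym((A/2)^\vee[1])$. Choosing a basis $x_1,\dots,x_m$ of $(A/2)^\vee$, the source is the polynomial algebra on degree-$2$ generators $y_i$ and $\mathcal F(y_i)=x_i^2$; hence $\mathcal F$ is injective, i.e.\ a monomorphism, with image the subalgebra of squares. Using the description of cokernels recalled in the proof of \Cref{th_cohom} (the cokernel of $\alpha\colon\mathcal C\to\mathcal D$ is $\mathcal D/\alpha(\mathcal C_+)\mathcal D$), the cokernel of $\mathcal F$ is $\Sym((A/2)^\vee[1])/(x_i^2)\cong\Lambda((A/2)^\vee)$. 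Feeding $\phi=\mathcal F$ into the general fact yields that $c$ is a monomorphism and $\mathrm{coker}(c)\cong\Lambda((A/2)^\vee)$, which is exactly the asserted short exact sequence.

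I expect the only genuine subtlety to be the orientation and the diagram chase in the abelian-category lemma, in particular checking that ``monomorphism'' in the abstract category matches the concrete injectivity verified for $\mathcal F$ and that the induced map on cokernels is the one we name; this is handled uniformly by the short-exact-sequence characterization of bicartesian squares, so no Hopf-algebra-specific input beyond \Cref{th_cohom} is required. As a cross-check one can argue instead straight from \Cref{th_cohom}(a): in $\Sym(T(A))/I$ the relations $x^2=\tilde\beta^\vee(x)$ exhibit $H^*(A,\F2)$ as a free $\Sym(({}_2A)^\vee)$-module on the square-free monomials in $(A/2)^\vee$, so $c$ is split injective, and killing $({}_2A)^\vee$ turns the relations into $x^2=0$, recovering $\Lambda((A/2)^\vee)$ as the cokernel.
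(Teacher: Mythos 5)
Your proposal is correct and follows essentially the same route as the paper: the paper's own (one-line) proof also deduces the sequence from the short exact sequence $\F2\to\Sym(V)\xrightarrow{\mathcal F}\Sym(V)\to\Lambda(V)\to\F2$ together with the pushout square of \Cref{th_cohom}(b), and you have simply spelled out the abelian-category bookkeeping (transport of mono and cokernel across the bicartesian square) that the paper leaves implicit.
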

\begin{proof}
This follows from the fact that the sequence $\F2 \to \Sym(V) \overset{\mathcal F}\to \Sym(V) \to \Lambda(V) \to \F2$ is short exact in the category of bicommutative Hopf algebras for any vector space $V$ and the pushout from Theorem \ref{th_cohom}. 
\end{proof}

\begin{corollary} For a finitely generated abelian group $A$ there is a natural filtration of $H^n(A,\F2)$ by subfunctors 
$H^n(A,\F2)=\Phi^0 \supseteq \Phi^1 \supseteq \dots \supseteq \Phi^{\lfloor n/2 \rfloor+1 }=0 $
such that 
\[\Phi^i/\Phi^{i+1}\cong \Lambda^{n-2i}((A/2)^\vee)\otimes \Sym^i(({}_2A)^\vee).\]
\end{corollary}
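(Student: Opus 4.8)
The plan is to apply \Cref{th_nat_filt_gen} to the natural short exact sequence of graded connected bicommutative Hopf algebras supplied by \Cref{cor_short_exact}, namely
\[
\F2 \longrightarrow \Sym(({}_2A)^\vee[2]) \longrightarrow H^*(A,\F2) \longrightarrow \Lambda((A/2)^\vee[1]) \longrightarrow \F2,
\]
with $\mathcal A=\Sym(({}_2A)^\vee[2])$, $\mathcal B=H^*(A,\F2)$ and $\mathcal C=\Lambda((A/2)^\vee[1])$. \Cref{th_nat_filt_gen} then produces, for every $i\geq 0$, an isomorphism $\mathcal A_+^i\mathcal B/\mathcal A_+^{i+1}\mathcal B\cong (\mathcal A_+^i/\mathcal A_+^{i+1})\otimes \mathcal C$ which depends only on the short exact sequence. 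Since this sequence is natural in $A$, the decreasing filtration $\mathcal A_+^i\mathcal B$ of $H^*(A,\F2)$ consists of subfunctors on the category of finitely generated abelian groups.

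Next I would identify the associated graded of $\mathcal A$. For the symmetric algebra on a vector space $W$ one has $\Sym_+^i(W)=\bigoplus_{k\geq i}\Sym^k(W)$, whence $\Sym_+^i(W)/\Sym_+^{i+1}(W)\cong \Sym^i(W)$. Applying this with $W=({}_2A)^\vee[2]$ gives $\mathcal A_+^i/\mathcal A_+^{i+1}\cong \Sym^i(({}_2A)^\vee[2])$, a space concentrated in internal degree $2i$. Combining with $\mathcal C=\bigoplus_j\Lambda^j((A/2)^\vee[1])$, whose summand $\Lambda^j$ sits in internal degree $j$, the isomorphism of the previous paragraph reads, as graded vector spaces,
\[
\mathcal A_+^i\mathcal B/\mathcal A_+^{i+1}\mathcal B\ \cong\ \Sym^i(({}_2A)^\vee[2])\otimes \Lambda((A/2)^\vee[1]).
\]

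Finally I would pass to a fixed internal degree $n$. Define $\Phi^i$ to be the degree-$n$ component of $\mathcal A_+^i\mathcal B$; since everything in sight is graded and the filtration consists of graded submodules, taking the degree-$n$ component is exact, so $\Phi^i/\Phi^{i+1}$ is the degree-$n$ part of the graded quotient above. As $\Sym^i(({}_2A)^\vee[2])$ lives in degree $2i$ and $\Lambda^j((A/2)^\vee[1])$ in degree $j$, the degree-$n$ part forces $j=n-2i$, giving $\Phi^i/\Phi^{i+1}\cong \Lambda^{n-2i}((A/2)^\vee)\otimes \Sym^i(({}_2A)^\vee)$, as asserted. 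For the endpoints, $\mathcal A_+^0\mathcal B=\mathcal B$ gives $\Phi^0=H^n(A,\F2)$, while every element of $\mathcal A_+^i\mathcal B$ has internal degree at least $2i$ (each factor of $({}_2A)^\vee$ contributes degree $2$), so $\Phi^i=0$ as soon as $2i>n$, in particular for $i=\lfloor n/2\rfloor+1$. The arguments are essentially bookkeeping once \Cref{th_nat_filt_gen} is available; the only point requiring genuine care is that the $\Phi^i$ are subfunctors, i.e. that the construction is natural in $A$. This is exactly where I rely on the fact that the isomorphism of \Cref{th_nat_filt_gen} depends only on the short exact sequence (so that no non-natural choice of splitting of $\pi$ enters) together with the naturality in $A$ of the sequence of \Cref{cor_short_exact}.
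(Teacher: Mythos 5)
Your proposal is correct and follows exactly the paper's (one-line) proof: apply \Cref{th_nat_filt_gen} to the short exact sequence of \Cref{cor_short_exact} and read off the degree-$n$ part of the resulting filtration by powers of the augmentation ideal of $\Sym(({}_2A)^\vee[2])$. The degree bookkeeping and the naturality argument you supply are the details the paper leaves implicit, and they check out.
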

\begin{proof}
This follows from Corollary \ref{cor_short_exact} and Theorem \ref{th_nat_filt_gen}.
\end{proof}

\begin{corollary}
For a finitely generated ablelian group $A$ the group $H^n(A,\F2)$ is naturally isomorphic to the cokernel of the map 
\[\bigoplus_{2k+l+2m=n; \ k\geq 1} \Sym^k(A/2^\vee) \otimes \Sym^l(A/2^\vee)\otimes \Sym^m({}_2A^\vee) \longrightarrow \bigoplus_{i+2j=n} \Sym^i(A/2^\vee)\otimes \Sym^j({}_2A^\vee).  \]
In particular, there is a short exact sequence
\[0 \longrightarrow (A/2)^\vee \longrightarrow \Sym^2(A/2^\vee)\oplus {}_2A^\vee \longrightarrow H^2(A,\F2) \longrightarrow 0, \]
where the first map is $x\mapsto (x^2,\tilde \beta^\vee(x)),$
and a short exact sequence 
$$ 0 \longrightarrow (A/2)^\vee \otimes (A/2)^\vee \longrightarrow \Sym^3((A/2)^\vee)\oplus (A/2)^\vee \otimes ({}_2A)^\vee \longrightarrow H^3(A,\F2) \longrightarrow 0,$$
where  $x\otimes y \mapsto (x^2y, y\otimes \tilde \beta^\vee(x)).$
\end{corollary}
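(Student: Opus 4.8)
The plan is to derive this corollary mechanically from the two preceding results, \Cref{cor_short_exact} together with \Cref{th_nat_filt_gen}, rather than computing anything about cohomology directly. The key observation is that the short exact sequence of bicommutative Hopf algebras
\[
\F2 \longrightarrow \Sym(({}_2A)^\vee) \longrightarrow H^*(A,\F2) \longrightarrow \Lambda((A/2)^\vee) \longrightarrow \F2
\]
from \Cref{cor_short_exact} is exactly an instance of a short exact sequence $K\to \mathcal A \to \mathcal B \to \mathcal C \to K$ with $\mathcal A = \Sym(({}_2A)^\vee)$, $\mathcal B = H^*(A,\F2)$, and $\mathcal C = \Lambda((A/2)^\vee)$. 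Recall that in the abelian category of bicommutative Hopf algebras the cokernel of $\alpha\colon C\to D$ is $D/\alpha(C_+)D$; so describing $H^n(A,\F2)$ as a cokernel amounts to identifying the defining relations of $\mathcal B$ as the quotient algebra obtained from the injection $\mathcal A \mono \mathcal B$ together with the presentation of $\mathcal A$ itself.

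First I would unwind the cokernel description directly from the pushout presentation $H^*(A,\F2)\cong \Sym(T(A))/I$ of \Cref{th_cohom}(a). In each degree $n$ the graded piece $\Sym(T(A))_n = \bigoplus_{i+2j=n}\Sym^i((A/2)^\vee)\otimes \Sym^j(({}_2A)^\vee)$, which is the codomain displayed in the statement. The ideal $I$ is generated by the elements $x^2-\tilde\beta^\vee(x)$ with $x\in (A/2)^\vee$ sitting in degree $2$, and the degree-$n$ component $I_n$ of this ideal is the image of the multiplication map that sends a product of a generator $x^2-\tilde\beta^\vee(x)$ (from a factor $\Sym^k$ with $k\geq 1$, where the relation is quadratic in the degree-one variables) against an arbitrary monomial. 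Parametrizing the ambient monomials by $\Sym^l((A/2)^\vee)\otimes \Sym^m(({}_2A)^\vee)$ and the relation-carrying factor by $\Sym^k((A/2)^\vee)$ with $k\geq 1$, the total internal degree of such a product is $2k+l+2m$, which must equal $n$; this is precisely the indexing set $2k+l+2m=n,\ k\geq 1$ of the domain. Thus the map in the statement is the relation-generating map, and its cokernel is $\Sym(T(A))_n/I_n = H^n(A,\F2)$.

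The two explicit short exact sequences for $n=2$ and $n=3$ are then read off by specialization. For $n=2$ the only index with $2k+l+2m=2,\ k\geq 1$ is $(k,l,m)=(1,0,0)$, so the domain is $\Sym^1((A/2)^\vee)=(A/2)^\vee$ and the relation map is $x\mapsto x^2-\tilde\beta^\vee(x)$; working over $\F2$ the sign is irrelevant, and writing the target as the direct sum $\Sym^2((A/2)^\vee)\oplus ({}_2A)^\vee$ (the $i+2j=2$ pieces being $(i,j)=(2,0)$ and $(0,1)$) gives the map $x\mapsto (x^2,\tilde\beta^\vee(x))$, whose injectivity makes the sequence short exact. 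For $n=3$ the solutions of $2k+l+2m=3,\ k\geq 1$ are $(1,1,0)$ and $(1,0,\cdot)$ is excluded since $m$ would be half-integral, so the domain is $(A/2)^\vee\otimes (A/2)^\vee$; the target pieces with $i+2j=3$ are $(3,0)$ and $(1,1)$, yielding $\Sym^3((A/2)^\vee)\oplus (A/2)^\vee\otimes ({}_2A)^\vee$ and the map $x\otimes y\mapsto (x^2y, y\otimes \tilde\beta^\vee(x))$.

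The main obstacle I anticipate is purely bookkeeping: verifying that the degree-$n$ part of the ideal $I$ is exactly the image of the stated map, i.e. that multiplying the quadratic generators by arbitrary monomials and collecting by internal degree reproduces the index set $2k+l+2m=n$ with $k\geq 1$ and no overcounting or missing contributions. One must be slightly careful that the factor carrying the relation genuinely has $k\geq 1$ (a relation involves at least one squared degree-one variable) and that the injectivity claims in the $n=2,3$ cases hold — for $n=2$ this is immediate since $x\mapsto x^2$ is the Frobenius, which \Cref{th_cohom}(b) already records as a monomorphism, and for $n=3$ the same Frobenius-injectivity on the first coordinate suffices. No deep input beyond the already-established pushout and the abelian-category description of cokernels is needed.
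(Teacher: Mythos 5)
Your proof is correct and follows essentially the same route as the paper: both reduce the statement to Theorem \ref{th_cohom}, reading $H^*(A,\F2)$ as the Hopf-algebraic cokernel of $\Sym((A/2)^\vee[2])\to\Sym((A/2)^\vee[1])\otimes\Sym(({}_2A)^\vee[2])$ (equivalently, as $\Sym(T(A))/I$) and then taking degree-$n$ components; the paper's proof is just a two-line version of your bookkeeping. Note that your opening appeal to \Cref{cor_short_exact} and \Cref{th_nat_filt_gen} is not actually what your argument uses (and is not what is needed here) --- the pushout/presentation of \Cref{th_cohom} is the right input, which is where you in fact end up.
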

\begin{proof}
Theorem \ref{th_cohom} implies that $H^*(A,\F2)$ is the cokernel of the morphism  $\Sym((A/2)^\vee[2]) \to \Sym((A/2)^\vee[1])\otimes \Sym(({}_2A)^\vee[2])$ in the category of bicommutative Hopf algebras. The assertion follows. 
\end{proof}

\section{\bf Homology}

In this section dual results for homology are presented. They hold in a more general setting: for arbitrary abelian group, not only for finitely generated. The reason why the results hold in a more general setting is that homology, divided powers,  tensor products and all other functors that occur in the statements commute with filtered colimits. The proof for all the statements is the following: first we prove the results for finitely generated abelian groups dualising the results for cohomology; and then use that all the functors commute with filtered colimits, and present an abelian group as a colimit of its finitely generated subgroups.  

For a vector space $V$ we denote by $\Gamma(V)=\bigoplus \Gamma^n(V)$ the divided power algebra of $V.$ It is a subalgebra of the shuffle algebra ${\sf Sh}(V)$ such that $\Gamma^n(V)$ is the space of invariants of $V^{\otimes n}$ under the action of the symmetric group. It is well known that, if $V$ is finite dimensional, then $\Gamma(V)=\Sym(V^\vee)^\vee.$  

\begin{theorem}\label{th_hom} Let $A$ be an abelian group (not necessary finitely generated). Then the following square is a pullback in the category of graded bicommutative Hopf algebras
$$
\begin{tikzcd} 
H_*(A,\F2)\arrow[rr] \arrow[d] && \Gamma({}_2A[2])\arrow[d,"\Gamma(\tilde \beta)"] \\
\Gamma(A/2[1])\arrow[rr,"{\mathcal V}"] && \Gamma(A/2[2]),
\end{tikzcd}
$$
where $\mathcal V$ denotes the Verschiebung.
\end{theorem}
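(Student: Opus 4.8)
The plan is to derive Theorem \ref{th_hom} from the cohomological Theorem \ref{th_cohom} by a duality argument in the finitely generated case, and then to pass to the general case by a filtered-colimit argument, exactly along the lines announced in the opening paragraph of this section. First I would recall that for a finitely generated abelian group $A$ the cohomology $H^n(A,\F2)$ is finite dimensional and is the linear dual of $H_n(A,\F2)$, and that this duality is compatible with all the structure in play: the Pontryagin product dualizes to the comultiplication and vice versa, so that $H_*(A,\F2)$ and $H^*(A,\F2)$ are dual bicommutative Hopf algebras. The key functorial identities I would invoke are $\Gamma(V)\cong \Sym(V^\vee)^\vee$ for finite dimensional $V$ (stated just above the theorem), together with the fact, used already in \Cref{th_cohom}, that $(A/2)^\vee$ and $({}_2A)^\vee$ are the relevant dual spaces. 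Under dualization the Frobenius homomorphism $\mathcal F$ on $\Sym$ corresponds to the Verschiebung $\mathcal V$ on $\Gamma$, and the map $\Sym(\tilde\beta^\vee)$ corresponds to $\Gamma(\tilde\beta)$.

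The main step is then to observe that dualizing reverses arrows and interchanges pushouts with pullbacks. Concretely, I would apply the contravariant exact duality functor $(-)^\vee$ on the abelian category of bicommutative Hopf algebras to the square \eqref{eq_th_square} of Theorem \ref{th_cohom}(b). Since $(-)^\vee$ is an exact contravariant equivalence between finite dimensional bicommutative Hopf algebras and their duals, it sends the pushout-and-pullback square \eqref{eq_th_square} to a square that is again both a pushout and a pullback, but with every object replaced by its dual and every arrow reversed. Carrying out the identifications $\Sym((A/2)^\vee[i])^\vee\cong \Gamma(A/2[i])$, $\Sym(({}_2A)^\vee[2])^\vee\cong \Gamma({}_2A[2])$, $\mathcal F^\vee=\mathcal V$, and $(\Sym\tilde\beta^\vee)^\vee=\Gamma(\tilde\beta)$, and using $H^*(A,\F2)^\vee\cong H_*(A,\F2)$, the dualized square is precisely the asserted square of Theorem \ref{th_hom}; in particular it is a pullback. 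This establishes the finitely generated case.

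For a general abelian group $A$ I would write $A=\varinjlim A_j$ as the filtered colimit of its finitely generated subgroups. The functors $A\mapsto H_*(A,\F2)$, $A\mapsto \Gamma(A/2[i])$, and $A\mapsto \Gamma({}_2A[2])$ all commute with filtered colimits (homology commutes with filtered colimits of spaces, and $A\mapsto A/2$, $A\mapsto {}_2A$, together with $\Gamma$ and $\otimes$, all preserve filtered colimits). The pullback square for each $A_j$ is natural in $A_j$, so taking the colimit over $j$ yields a commutative square for $A$ whose every corner is the colimit of the corresponding corners for the $A_j$. It then remains to check that the colimit of a filtered system of pullback squares of bicommutative Hopf algebras is again a pullback; this holds because pullbacks in this abelian category are computed as kernels, and filtered colimits are exact in an abelian category with exact filtered colimits (equivalently, filtered colimits commute with finite limits), so the colimit square is again a pullback.

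The step I expect to be the main obstacle is the careful verification, in the finitely generated case, that the duality functor $(-)^\vee$ really does identify all four corners and both structure maps of the dualized cohomology square with the corresponding ingredients of the homology square, in particular the identification $\mathcal F^\vee=\mathcal V$ of Frobenius with Verschiebung and the compatibility of $\tilde\beta^\vee$ with $\tilde\beta$ under the grading shifts $[1]$ and $[2]$. Once these identifications are pinned down and the exactness of $(-)^\vee$ is invoked, the transfer of the pullback property is formal; and the colimit argument, while requiring one to know that filtered colimits preserve the relevant limits in the category of bicommutative Hopf algebras, is otherwise routine.
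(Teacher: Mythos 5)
Your proposal follows exactly the strategy the paper itself uses: it proves the finitely generated case by dualizing the pushout/pullback square of Theorem \ref{th_cohom}(b) via $\Gamma(V)\cong\Sym(V^\vee)^\vee$ and $\mathcal F^\vee=\mathcal V$, and then extends to arbitrary abelian groups by writing $A$ as a filtered colimit of its finitely generated subgroups and using that all the functors involved commute with filtered colimits. The argument is correct and, if anything, spells out the duality identifications and the exactness of filtered colimits in more detail than the paper does.
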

\begin{corollary}\label{cor_hom_short_exact}
For an abelian group $A$ there is a short exact sequence of graded bicommutative Hopf algebras
\[\F2 \longrightarrow \Lambda(A/2) \longrightarrow H_*(A,\F2) \longrightarrow \Gamma({}_2 A) \longrightarrow \F2.\]
\end{corollary}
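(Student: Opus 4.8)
The plan is to mirror, in the dual direction, the one-line proof of \Cref{cor_short_exact}: there the cohomology short exact sequence was extracted from the \emph{pushout} square of \Cref{th_cohom} together with the Frobenius short exact sequence $\F2 \to \Sym(V)\overset{\mathcal F}\to \Sym(V)\to \Lambda(V)\to \F2$. Here I would instead use the \emph{pullback} square of \Cref{th_hom} together with the Verschiebung short exact sequence. The first step is to record that for any vector space $W$ the sequence of bicommutative Hopf algebras
\[\F2 \longrightarrow \Lambda(W) \longrightarrow \Gamma(W) \overset{\mathcal V}\longrightarrow \Gamma(W) \longrightarrow \F2\]
is short exact; this is precisely the graded dual of the Frobenius sequence used in \Cref{cor_short_exact}, since $\mathcal V=\mathcal F^\vee$, $\Sym(W)^\vee=\Gamma(W^\vee)$ and $\Lambda(W)^\vee=\Lambda(W^\vee)$. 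In particular the Verschiebung $\mathcal V\colon \Gamma(A/2[1])\to\Gamma(A/2[2])$ appearing in \Cref{th_hom} is an epimorphism whose Hopf kernel is $\Lambda(A/2)$, placed in the homological grading with $A/2$ in degree $1$.

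Next I would feed this into the abelian structure on the category of bicommutative Hopf algebras already invoked above. In any abelian category the pullback of an epimorphism is again an epimorphism, and in a pullback square the kernel of one projection is canonically isomorphic to the kernel of the map on the opposite side. Applying both facts to the pullback square of \Cref{th_hom}, with $\mathcal V$ along the bottom and $\Gamma(\tilde\beta)$ along the right, the top projection $p\colon H_*(A,\F2)\to\Gamma({}_2A[2])$ is the base change of $\mathcal V$ along $\Gamma(\tilde\beta)$; hence $p$ is an epimorphism and $\ker(p)\cong\ker(\mathcal V)\cong\Lambda(A/2)$. This is exactly the assertion that
\[\F2 \longrightarrow \Lambda(A/2) \longrightarrow H_*(A,\F2) \overset{p}\longrightarrow \Gamma({}_2A) \longrightarrow \F2\]
is a short exact sequence of graded bicommutative Hopf algebras, reading $\Gamma({}_2A)$ with ${}_2A$ in degree $2$. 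Since every map in sight is natural in $A$ and the pullback of \Cref{th_hom} is natural, the whole short exact sequence is natural in $A$.

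Because \Cref{th_hom} is already stated for an arbitrary abelian group $A$, this argument delivers the corollary in full generality and no separate colimit step is required. Alternatively one could prove the sequence for finitely generated $A$ by dualizing \Cref{cor_short_exact} directly, and then pass to a filtered colimit over the finitely generated subgroups of $A$, using that $H_*(-,\F2)$, $\Lambda(-/2)$, $\Gamma({}_2(-))$ and filtered colimits are exact, as in the strategy announced at the start of this section.

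The step I expect to require the most care is the identification of the kernel of $\mathcal V$ with $\Lambda(A/2)$ and its compatibility with the gradings: the relevant kernel is the Hopf-algebra kernel, \emph{not} the degreewise vector-space kernel of $\mathcal V$ (indeed $\mathcal V$ vanishes in odd degrees, yet its Hopf kernel is far from all of $\Gamma(A/2[1])$ there), so one must argue inside the abelian category of bicommutative Hopf algebras rather than degree by degree. Everything else — preservation of epimorphisms and of kernels under pullback — is a formal consequence of that abelian structure.
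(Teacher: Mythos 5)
Your argument is correct and is essentially the paper's: the homology corollary is covered there by the blanket ``dualize the cohomological statements and pass to filtered colimits'' remark opening the homology section, and your derivation from the pullback square of \Cref{th_hom} together with the exactness of $\F2\to\Lambda(W)\to\Gamma(W)\xrightarrow{\mathcal V}\Gamma(W)\to\F2$ is the exact dual of the paper's one-line proof of \Cref{cor_short_exact} from the pushout square and the Frobenius sequence. Your closing caution that the relevant kernel is the Hopf-algebra kernel (not the degreewise vector-space kernel of $\mathcal V$) is well placed and consistent with the paper's use of the abelian structure on bicommutative Hopf algebras.
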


\begin{corollary}
For an abelian group $A$ there is a natural filtration of $H_n(A,\F2)$ by subfunctors 
$ 0=\Psi_{-1}\subseteq \Psi_0 \subseteq \dots \subseteq \Psi_{\lfloor n/2 \rfloor}=H_n(A,\F2)$
such that 
\[\Psi_i/\Psi_{i-1}\cong \Lambda^{n-2i}(A/2)\otimes \Gamma^i({}_2A).\]
\end{corollary}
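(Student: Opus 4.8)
The plan is to deduce this exactly as in the cohomology case, by feeding the natural short exact sequence of \Cref{cor_hom_short_exact} into \Cref{th_nat_filt_gen}. Write $\mathcal A=\Lambda(A/2)$, $\mathcal B=H_*(A,\F2)$ and $\mathcal C=\Gamma({}_2A)$, so that \Cref{cor_hom_short_exact} provides a short exact sequence $\F2\to\mathcal A\to\mathcal B\to\mathcal C\to\F2$ of graded connected bicommutative Hopf algebras which is natural in $A$. \Cref{th_nat_filt_gen} then supplies, for every $j\ge 0$, a natural isomorphism $\mathcal A_+^j\mathcal B/\mathcal A_+^{j+1}\mathcal B\cong(\mathcal A_+^j/\mathcal A_+^{j+1})\otimes\mathcal C$.

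First I would identify the two tensor factors. Since $\mathcal A=\Lambda(A/2)$ is the exterior algebra on $A/2$ placed in degree $1$, its augmentation ideal satisfies $\mathcal A_+^j=\bigoplus_{k\ge j}\Lambda^k(A/2)$, whence $\mathcal A_+^j/\mathcal A_+^{j+1}\cong\Lambda^j(A/2)$, concentrated in degree $j$. On the other hand $\mathcal C=\Gamma({}_2A)$ is concentrated in even degrees, with $\Gamma^m({}_2A)$ in degree $2m$. Taking the degree-$n$ component of the isomorphism above therefore yields $(\mathcal A_+^j\mathcal B/\mathcal A_+^{j+1}\mathcal B)_n\cong\Lambda^j(A/2)\otimes\Gamma^{(n-j)/2}({}_2A)$, which is zero unless $n-j$ is even.

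Now I would set $\Psi_i:=(\mathcal A_+^{\,n-2i}\mathcal B)_n$ for $-1\le i\le\lfloor n/2\rfloor$ (with the conventions $\mathcal A_+^0=\mathcal A$ and $\mathcal A_+^m=0$ in degree $n$ when $m>n$). Because lowering the exponent enlarges the ideal, this is an increasing filtration, and the endpoints come out right: $\Psi_{-1}=(\mathcal A_+^{\,n+2}\mathcal B)_n=0$, while $\Psi_{\lfloor n/2\rfloor}=(\mathcal A_+^{\varepsilon}\mathcal B)_n$ with $\varepsilon\in\{0,1\}$ equals $H_n(A,\F2)$: for $n$ even this is all of $\mathcal B_n$, and for $n$ odd we use $\mathcal A_+\mathcal B=\Ker(\pi)$ together with $\mathcal C_n=0$ to see that $(\mathcal A_+\mathcal B)_n=\mathcal B_n$. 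The successive quotient $\Psi_i/\Psi_{i-1}$ is the two-step quotient of the decreasing filtration $\{(\mathcal A_+^j\mathcal B)_n\}_j$ between $j=n-2i$ and $j=n-2i+2$; its two graded pieces sit at $j=n-2i$ and $j=n-2i+1$, and the latter vanishes since it involves $\Gamma^m({}_2A)$ in the odd degree $2i-1$. Hence $\Psi_i/\Psi_{i-1}\cong\Lambda^{n-2i}(A/2)\otimes\Gamma^i({}_2A)$, and the naturality asserted in \Cref{th_nat_filt_gen}, combined with the naturality of the sequence of \Cref{cor_hom_short_exact}, makes $\Psi_i$ a subfunctor and the isomorphism natural in $A$.

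The one genuine point of care --- the main obstacle, such as it is --- is the mismatch of indexing relative to the cohomology corollary. There the filtering sub-Hopf-algebra $\Sym(({}_2A)^\vee)$ is generated in degree $2$, so the filtration from \Cref{th_nat_filt_gen} is already indexed by the symmetric degree and the quotients appear with no reindexing. In homology the filtering sub-Hopf-algebra $\Lambda(A/2)$ is generated in degree $1$, so \Cref{th_nat_filt_gen} produces a filtration indexed by the exterior degree $j=n-2i$; converting it to the stated increasing filtration indexed by $i$ is exactly what forces the reindexing above, and the concentration of $\Gamma({}_2A)$ in even degrees is precisely what guarantees that the intervening odd-index steps are trivial. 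Alternatively, for finitely generated $A$ one may simply dualize the cohomology filtration --- taking $\Psi_i$ to be the annihilator of $\Phi^{i+1}$ --- and then pass to filtered colimits, as in the rest of this section; this avoids the parity bookkeeping but needs the colimit argument.
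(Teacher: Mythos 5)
Your proof is correct and follows essentially the paper's route: the paper derives this (and its cohomological twin) by feeding the natural Hopf-algebra short exact sequence of \Cref{cor_hom_short_exact} into \Cref{th_nat_filt_gen}, exactly as you do, and your careful reindexing (using that $\Lambda(A/2)$ is generated in degree $1$ while $\Gamma({}_2A)$ sits in even degrees, so the odd-index steps of the $\mathcal A_+^j\mathcal B$ filtration contribute nothing in degree $n$) correctly supplies the bookkeeping the paper leaves implicit. The only cosmetic difference is that the paper officially phrases all homological statements as ``dualize the finitely generated cohomology case and pass to filtered colimits,'' whereas you apply \Cref{th_nat_filt_gen} directly to the homology sequence, which works for arbitrary $A$ without the colimit step; you note this alternative yourself.
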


\begin{corollary}
For an ablelian group $A$ the group $H_n(A,\F2)$ is naturally isomorphic to the kernel of a map 
\[
\bigoplus_{i+2j=n} \Gamma^i(A/2)\otimes \Gamma^j({}_2A)
\longrightarrow 
\bigoplus_{2k+l+2m=n; \ k\geq 1} \Gamma^k(A/2) \otimes \Gamma^l(A/2)\otimes \Gamma^m({}_2A) 
.  \]
In particular, there is a short exact sequence
\[0 
\longrightarrow 
H_2(A,\F2) 
\longrightarrow 
\Gamma^2(A/2)\oplus {}_2A 
\longrightarrow 
A/2 
\longrightarrow 0, \]
and a short exact sequence 
\[ 
0 
\longrightarrow 
H_3(A,\F2) 
\longrightarrow 
\Gamma^3(A/2)\oplus A/2 \otimes {}_2A 
\longrightarrow 
A/2 \otimes A/2 
\longrightarrow 
0.
\]
\end{corollary}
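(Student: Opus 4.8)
The plan is to deduce this corollary from the already-established cohomological description by duality, and then pass from finitely generated groups to arbitrary ones by a filtered-colimit argument, exactly along the scheme announced at the start of this section. The structural reason behind the statement is that, by \Cref{th_hom} and the fact that bicommutative Hopf algebras form an abelian category, $H_*(A,\F2)$ is the Hopf kernel of the map $\Gamma(A/2[1])\otimes\Gamma({}_2A[2])\to\Gamma(A/2[2])$ induced by $\mathcal V$ and $\Gamma(\tilde\beta)$; the corollary just records the degree-$n$ component of this kernel explicitly.

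First I would treat a finitely generated abelian group $A$. Over the field $\F2$ the universal coefficient theorem gives a natural isomorphism $H^n(A,\F2)\cong H_n(A,\F2)^\vee$, the relevant $\mathrm{Ext}$-terms vanishing over a field. The preceding corollary for cohomology exhibits $H^n(A,\F2)$ as the cokernel of an explicit map
\[
\bigoplus_{2k+l+2m=n;\ k\geq 1} \Sym^k((A/2)^\vee) \otimes \Sym^l((A/2)^\vee)\otimes \Sym^m(({}_2A)^\vee) \longrightarrow \bigoplus_{i+2j=n} \Sym^i((A/2)^\vee)\otimes \Sym^j(({}_2A)^\vee)
\]
between finite-dimensional spaces (finite-dimensionality holds since $A/2$ and ${}_2A$ are finite-dimensional and each graded piece is a finite sum of finite tensor products). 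Dualizing a right-exact sequence turns a cokernel into the kernel of the transpose, and for finite-dimensional $V$ one has the natural identification $\Sym^i(V^\vee)^\vee\cong\Gamma^i(V)$. Hence $H_n(A,\F2)\cong H^n(A,\F2)^\vee$ is naturally the kernel of the transpose, which is precisely the asserted map
\[
\bigoplus_{i+2j=n} \Gamma^i(A/2)\otimes \Gamma^j({}_2A) \longrightarrow \bigoplus_{2k+l+2m=n;\ k\geq 1} \Gamma^k(A/2) \otimes \Gamma^l(A/2)\otimes \Gamma^m({}_2A).
\]
The two short exact sequences for $n=2,3$ are read off by substituting these values (using $\Gamma^0=\F2$ and $\Gamma^1=\mathrm{id}$); their surjectivity on the right is dual to the injectivity of the first map in the corresponding cohomological short exact sequences, which was part of the cohomology corollary.

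To pass to an arbitrary abelian group $A$, I would write $A=\varinjlim_i A_i$ as the filtered colimit of its finitely generated subgroups. Each functor involved — $H_n(-,\F2)$, $(-)/2\cong-\otimes\ZZ/2$, ${}_2(-)\cong\Tor(-,\ZZ/2)$, the divided-power functors $\Gamma^i$, and tensor products over $\F2$ — commutes with filtered colimits. Since filtered colimits are exact, kernels commute with them too, so the kernel descriptions and the low-degree short exact sequences for the $A_i$ assemble into the corresponding statements for $A$; surjectivity in the $n=2,3$ sequences survives because a filtered colimit of surjections is surjective.

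The point requiring the most care is the compatibility of the duality with all of the structure: one must verify that the natural isomorphism $H^n(A,\F2)\cong H_n(A,\F2)^\vee$ identifies the transpose of the explicit cohomology cokernel map with exactly the homology map in the statement, i.e.\ that the identifications $\Sym^\bullet(V^\vee)^\vee\cong\Gamma^\bullet(V)$ and $\mathcal V^\vee\leftrightarrow\mathcal F$ match the two index sets $2k+l+2m=n$ and $i+2j=n$ summand by summand and respect naturality in $A$. Once this bookkeeping is checked — and it is consistent with the Hopf-kernel interpretation coming from \Cref{th_hom} — the corollary follows.
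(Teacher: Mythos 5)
Your proposal is correct and follows essentially the same route as the paper: the authors state at the start of the homology section that all results there are obtained by first dualizing the finitely generated cohomological statements (using $\Gamma^i(V)\cong\Sym^i(V^\vee)^\vee$ to turn the cokernel presentation into a kernel presentation) and then passing to arbitrary abelian groups via filtered colimits of finitely generated subgroups. Your added care about matching the index sets and naturality under the duality is a reasonable elaboration of the same argument.
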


\section{\bf Non existence of a factorization via pairs of vector spaces}

\begin{proposition}\label{prop_non_existence} Let $\textsl{Vect}^2$ be the category of pairs of vector spaces over $\F2$ and $\ttt:\textsl{Ab}\to \textsl{Vect}^2$ be the functor $\ttt(A)=(A/2,{}_2A).$ Assume that $n\geq 2.$ Then there is no a functor $\Phi:\textsl{Vect}^2\to \textsl{Vect}$ such that $\Phi\circ \ttt \cong H_n(-,\F2).$
\[H_n(A,\F2)\not\cong \Phi(A/2,{}_2A). \]
Moreover, there is no such a functor even if we restrict the functors to the category of finitely generated abelian groups and the category of finite dimensional vector spaces. 
\end{proposition}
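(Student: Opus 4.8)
The plan is to turn the naturality of a hypothetical isomorphism $\Phi\circ\ttt\cong H_n(-,\F2)$ into $GL_m(\F2)$-equivariance, and then to exhibit two abelian groups with the same value of $\ttt$ whose homology carries \emph{non-isomorphic} $GL_m(\F2)$-module structures. Note first that comparing $H_n$ on parallel morphisms is not enough: if $\bar f=\bar g$ on $A/2$ and ${}_2f={}_2g$ on ${}_2A$, then $h=f-g$ kills all the generators of $H_*(A,\F2)$ in positive degree, so $H_{\ge 1}(h)=0$ and $H_n(f)=H_n(g)$; thus $H_n$ is already determined by $\ttt$ on individual maps, and the obstruction must be sought at the level of the $\ttt$-fibres and their automorphisms.

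So fix $n\ge 2$ and suppose for contradiction there is a natural isomorphism $\eta:\Phi\circ\ttt\xrightarrow{\cong}H_n(-,\F2)$. I would take $A=(\ZZ/2)^m$ and $A'=(\ZZ/4)^m$. Both satisfy $A/2\cong{}_2A\cong\F2^m=:V$, so $\ttt(A)=\ttt(A')=(V,V)=:P$. For $g\in GL_m(\F2)=\mathrm{Aut}(A)$ and any lift $\tilde g\in\mathrm{Aut}(A')\subseteq GL_m(\ZZ/4)$ one has $\ttt(g)=\ttt(\tilde g)=(g,g)\in\mathrm{End}(P)$. Naturality of $\eta$ gives $H_n(g)=\eta_A\,\Phi(g,g)\,\eta_A^{-1}$ and $H_n(\tilde g)=\eta_{A'}\,\Phi(g,g)\,\eta_{A'}^{-1}$, so the \emph{single} isomorphism $\theta:=\eta_{A'}\eta_A^{-1}$ conjugates $H_n(g)$ to $H_n(\tilde g)$ for all $g$ at once. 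Hence $\theta$ is an isomorphism of $\F2[GL_m(\F2)]$-modules $H_n((\ZZ/2)^m)\cong H_n((\ZZ/4)^m)$, the actions being those induced by automorphisms.

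Next I would identify the two modules. Since $H_*((\ZZ/2)^m,\F2)=\Gamma(V)$ with the standard action on $V=H_1$, the degree-$n$ part is $H_n((\ZZ/2)^m)=\Gamma^n(V)$. For $(\ZZ/4)^m$ the map $\tilde\beta$ vanishes, and by Künneth $H_*((\ZZ/4)^m,\F2)=\Lambda((A/2)[1])\otimes\Gamma(({}_2A)[2])$ with the two tensor factors each stable under automorphisms; as a $GL_m(\F2)$-module this gives $H_n((\ZZ/4)^m)=\bigoplus_{a+2b=n}\Lambda^a(V)\otimes\Gamma^b(V)$, the direct sum of the filtration quotients of \Cref{th_hom}. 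By the naturality of that filtration the two modules have the same composition factors and the same dimension, so the entire question is whether $\Gamma^n(V)$ is isomorphic to this split module.

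Finally I would separate them by the Jordan type of a unipotent element. For $n=2$, $m=3$ take $g\in GL_3(\F2)$ to be the unipotent with a single Jordan block (order $4$). A direct check on $\Gamma^2(\F2^3)$ (dimension $6$) shows $(g-1)^3\ne 0$, i.e. $g$ has a Jordan block of size $4$; whereas on $\Lambda^2(\F2^3)\oplus\F2^3\cong J_3\oplus J_3$ one has $(g-1)^3=0$. Thus the modules are not isomorphic, contradicting the existence of $\theta$ and hence of $\Phi$; since all groups here are finite and all spaces finite-dimensional, the restricted statement follows as well. For general $n\ge 2$ the same scheme applies with $m$ chosen large: one compares the Jordan type of a regular unipotent of $GL_m(\F2)$ on $\Gamma^n(V)$ with that on $\bigoplus_{a+2b=n}\Lambda^a(V)\otimes\Gamma^b(V)$. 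The main obstacle is exactly this last comparison — showing that the divided-power module $\Gamma^n(V)$ is not isomorphic to the split module for a suitable $m$; I expect this to be the technical heart, the unipotent Jordan-type invariant (equivalently, a non-vanishing self-extension class reflecting $\tilde\beta\ne 0$) being the tool and the case $(n,m)=(2,3)$ the prototype.
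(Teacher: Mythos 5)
Your $n=2$ argument is correct, and it takes a genuinely different route from the paper's. You distinguish $H_2((\ZZ/2)^3,\F2)\cong\Gamma^2(V)$ from $H_2((\ZZ/4)^3,\F2)\cong\Lambda^2(V)\oplus V$ as $GL_3(\F2)$-modules by the Jordan type of a regular unipotent ($(g-1)^3\neq 0$ on $\Gamma^2(\F2^3)$, which I have checked, versus $(g-1)^3=0$ on $J_3\oplus J_3$), the equivariance coming from naturality of the hypothetical $\eta$ and the equivariant splitting of $H_*((\ZZ/4)^m,\F2)$ coming from $\tilde\beta=0$ (cleanest via \Cref{th_hom}: the pullback along $\Gamma(\tilde\beta)=\Gamma(0)$ is naturally $\Lambda(V[1])\otimes\Gamma(V[2])$). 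The paper instead notes that $(F/2,0)$ is a retract of $(F/2,F/2)=(F/2,0)\oplus(0,F/2)$ in $\textsl{Vect}^2$, so a factorization would make $\Lambda^2(F/2)\cong H_2(F,\F2)$ a natural retract of $\Gamma^2(F/2)\cong H_2(F/2,\F2)$, and rules this out by computing quadratic $\ZZ$-modules ($0\to\ZZ/2\to 0$ is not a retract of $\ZZ/2\xrightarrow{1}\ZZ/2\xrightarrow{0}\ZZ/2$). Your version is more elementary (no quadratic functor theory) at the cost of an explicit matrix computation; both exploit the same phenomenon, namely that $\tilde\beta=\id$ for $(\ZZ/2)^m$ but $\tilde\beta=0$ for $(\ZZ/4)^m$.

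The genuine gap is the case $n>2$, which you leave open: you reduce it to showing $\Gamma^n(V)\not\cong\bigoplus_{a+2b=n}\Lambda^a(V)\otimes\Gamma^b(V)$ as $GL_m(\F2)$-modules for suitable $m$, and you yourself flag this comparison as ``the technical heart'' without proving it; repeating the Jordan-type analysis for every $n$ would be laborious and is not obviously uniform in $n$. The missing idea — and how the paper dispatches this in three lines — is a downward induction that makes any such comparison unnecessary: by the K\"unneth theorem $H_n(A\times\ZZ,\F2)\cong H_n(A,\F2)\oplus H_{n-1}(A,\F2)$, so $H_{n-1}(A,\F2)\cong\textsl{Coker}\bigl(H_n(A,\F2)\to H_n(A\times\ZZ,\F2)\bigr)$ naturally in $A$; since $\ttt(A\times\ZZ)=(A/2\oplus\F2,{}_2A)$ is functorial in $\ttt(A)$, a factorization $\Phi$ for $H_n$ yields one for $H_{n-1}$, namely $(V,W)\mapsto\textsl{Coker}\bigl(\Phi(V,W)\to\Phi(V\oplus\F2,W)\bigr)$, and iterating reduces everything to $n=2$, which your argument settles. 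With that patch your proof is complete. (A minor aside: the claim in your first paragraph that $\ttt(f)=\ttt(g)$ forces $H_n(f)=H_n(g)$ is never used later, and its justification is shaky as written — $H_*(A,\F2)$ contains divided power algebras, so it is not generated in degrees $1$ and $2$, and a Hopf algebra map can kill $\gamma_1$ without killing $\gamma_2$, as the Verschiebung shows; the statement is in fact true, but it needs \Cref{th_hom}, not a generators argument.)
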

\begin{proof}
Here we use the theory of quadratic functors from the category of free finitely generated abelian groups ${\sf fAb}\to {\sf Ab} $ and quadratic modules; we use the equivalence of these two categories (see \cite{Baues}, \cite{HartlVespa}, \cite{Mikhailov}).

Prove the proposition for $n=2.$ Assume the contrary, that there is a functor $\Phi:\textsl{Vect}^2\to \textsl{Vect}$ such that there is a natural isomorphism $H_2(A,\F2)\cong \Phi(A/2,{}_2A).$ 
Take a free finitely generated abelian group $F.$ Then there is an isomorphism 
$
\Phi(F/2,0)\cong H_2(F,\F2) \cong \Lambda^2 (F/2), 
$
which is natural by $F.$ Note that the standard isomorphism $\Sym((F/2)^\vee)\cong H^*(F/2,\F2) $ implies that there is an natural isomorphism
$ H_*(F/2,\F2)\cong \Gamma (F/2).$
It follows that 
$
H_2(F/2,\F2)\cong \Gamma^2 (F/2).
$ Therefore $\Phi(F/2,F/2)\cong \Gamma^2(F/2).$ Since $(F/2,F/2)=(F/2,0)\oplus (0,F/2)$ in the category $\textsl{Vect}^2$, we obtain that $\Phi(F/2,0)\cong \Lambda^2 F/2$ is a natural retract $\Phi(F/2,F/2)\cong \Gamma^2(F/2).$ On the other hand it is easy to compute the quadratic $\ZZ$-module of $\Lambda^2 (F/2):$
$0 \to \ZZ/2 \to 0,$
the quadratic $\ZZ$-module of $\Gamma^2(F/2):$
$ \ZZ/2 \xrightarrow{ 1} \ZZ/2 \xrightarrow{0} \ZZ/2,$
and note that the first one is not a retract of the second one. This is a contradiction. So we proved the proposition for $n=2.$

Prove for $n>2$ by induction. Note that the K\"unneth theorem implies
$H_{n}(A\times \ZZ,\F2) =H_{n}(A,\F2)\oplus H_{n-1}(A,\F2).$
Then
$
H_{n-1}(A,\F2)=\textsl{Coker}(H_n(A,\F2)\to H_n(A\times \ZZ,\F2)).
$
It follows that the existence of such a functor $\Phi$ for $H_n$ implies the existence of such a functor for $H_{n-1}.$ The assertion follows. 
\end{proof}

\end{document}